\numberwithin{equation}{section}
\newtheorem*{rep@theorem}{\rep@title}
\newcommand{\newreptheorem}[2]{%
\newenvironment{rep#1}[1]{%
 \def\rep@title{#2 \ref{##1}}%
 \begin{rep@theorem}}%
 {\end{rep@theorem}}}
\theoremstyle{plain}
\newtheorem{theorem}{Theorem}[section]
\newtheorem*{theoremun}{Theorem}
\newtheorem{lemma}[theorem]{Lemma}
\newtheorem{corollary}[theorem]{Corollary}
\newtheorem{proposition}[theorem]{Proposition}
\theoremstyle{definition}
\newtheorem{definition}[theorem]{Definition}
\theoremstyle{remark}
\newtheorem*{remark}{Remark}
\newcommand{\Gl}{\operatorname{GL}}
\newcommand{\Sl}{\operatorname{SL}}
\newcommand{\defi}{\stackrel{\text{\tiny def}}{=}}
\newcommand{\degree}{\operatorname{deg}}
\newcommand{\Tr}{\operatorname{Tr}}
\newcommand{\N}{\operatorname{N}}
\newcommand{\End}{\operatorname{End} }
\begin{document}

\title{On the trace and norm maps from $\Gamma_0(\mathfrak{p})$ to $\Gl_2(A)$}
\author{Christelle Vincent}
\address{Department of Mathematics, Stanford University, California 94305}
\email{cvincent@stanford.edu}
\keywords{Drinfeld modular forms, Drinfeld modules, congruences}

\maketitle

\begin{abstract}
Let $f$ be a  Drinfeld modular form for $\Gamma_0(\mathfrak{p})$. From such a form, one can obtain two forms for the full modular group $\Gl_2(A)$: by taking the trace or the norm from $\Gamma_0(\mathfrak{p})$ to $\Gl_2(A)$. In this paper we show some connections between the arithmetic modulo $\mathfrak{p}$ of the coefficients of the $u$-series expansion of $f$ and those of a form closely related to its trace, and of the coefficients of $f$ and those of its norm.
\end{abstract}

\section{Introduction and statement of results}

For a prime $\ell \in \mathbb{Z}$, reduction modulo $\ell$ connects modular forms for the congruence subgroup $\Gamma_0(\ell)$ to modular forms for the full modular group $\Sl_2(\mathbb{Z})$. More precisely, we have the following two theorems:

In \cite{serre}, Serre shows the following:

\begin{theoremun}
There is a one-to-one correspondence between forms of weight $2$ for $\Gamma_0(\ell)$ with rational $\ell$-integral $q$-series coefficients and forms of weight $\ell+1$ for $\Sl_2(\mathbb{Z})$ with rational $\ell$-integral $q$-series coefficients. Furthermore, define
\begin{equation*}
\Tr(f) = \sum_{\gamma \in \Gamma_0(\ell)\backslash \Sl_2(\mathbb{Z})} f | [\gamma],
\end{equation*}
then the correspondence is given by the map
\begin{equation*}
f \equiv \Tr(fg_{(0)}) \pmod{\ell}
\end{equation*}
where $g_{(0)}$ is an auxiliary form such that $g_{(0)} \equiv 1 \pmod{\ell}$. 
\end{theoremun}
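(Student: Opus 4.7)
The plan is to take $g_{(0)}$ to be a normalization of the Eisenstein series $E_{\ell-1}$ of weight $\ell-1$; by the classical Clausen-von Staudt congruence, this Eisenstein series satisfies $E_{\ell-1} \equiv 1 \pmod{\ell}$ on $q$-expansions. Then $fg_{(0)}$ has weight $2 + (\ell-1) = \ell+1$ on $\Gamma_0(\ell)$, and applying the trace lands in the space of weight-$(\ell+1)$ forms for $\Sl_2(\mathbb{Z})$, so at least the weights and levels line up.

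The central step is to verify the congruence $\Tr(fg_{(0)}) \equiv f \pmod{\ell}$ on $q$-series coefficients. I would pick coset representatives $\Sl_2(\mathbb{Z}) = \Gamma_0(\ell) \sqcup \bigsqcup_{j=0}^{\ell-1} \Gamma_0(\ell) \cdot ST^j$, where $S = \bigl(\begin{smallmatrix} 0 & -1 \\ 1 & 0 \end{smallmatrix}\bigr)$ and $T = \bigl(\begin{smallmatrix} 1 & 1 \\ 0 & 1 \end{smallmatrix}\bigr)$. The identity coset contributes $fg_{(0)} \equiv f \pmod{\ell}$ because $g_{(0)} \equiv 1$. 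For the remaining cosets, I would rewrite $(fg_{(0)})|_{\ell+1}(ST^j)$ via an Atkin-Lehner-style manipulation so that the sum over $j$ is expressed as a $U_\ell$-type average applied to the expansion of $fg_{(0)}$ at the cusp $0$; a careful tracking of the $(c\tau+d)^{-(\ell+1)}$ weight factor together with the width-$\ell$ expansion at $0$ then forces every coefficient of this averaged sum to be divisible by $\ell$. Getting the powers of $\ell$ to cancel cleanly in all weights and verifying $\ell$-integrality at the cusp $0$ is where I expect the main technical obstacle to lie.

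Injectivity of the correspondence is then immediate: if $\Tr(f_1 g_{(0)}) \equiv \Tr(f_2 g_{(0)}) \pmod{\ell}$, then the congruence of the previous paragraph forces $f_1 \equiv f_2 \pmod{\ell}$ on the $q$-expansion at $\infty$, and the $q$-expansion principle applied on $\Gamma_0(\ell)$ does the rest.

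For surjectivity, I would invoke Serre's structural theory of mod-$\ell$ modular forms for the full modular group. Since $E_{\ell-1} \equiv 1 \pmod{\ell}$, any weight-$(\ell+1)$ form for $\Sl_2(\mathbb{Z})$ with $\ell$-integral coefficients is congruent modulo $\ell$ to a \emph{formal} weight-$2$ mod-$\ell$ form; since there are no nonzero holomorphic weight-$2$ forms for the full modular group, such a class can only be realized by genuine weight-$2$ forms at higher level, and $\Gamma_0(\ell)$ is the minimal natural candidate. To turn this into a lift, I would match the dimensions of the mod-$\ell$ reductions on both sides using the standard dimension formulas for $M_2(\Gamma_0(\ell))$ and $M_{\ell+1}(\Sl_2(\mathbb{Z}))$, which differ by exactly the expected amount, and conclude that the injection of the previous paragraph is in fact a bijection.
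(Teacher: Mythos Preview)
Your choice of auxiliary form is where the argument breaks. Taking $g_{(0)}$ to be the level-$1$ Eisenstein series $E_{\ell-1}$ does \emph{not} make the non-identity cosets vanish modulo $\ell$. Carrying out the Atkin--Lehner manipulation you describe, one finds that the sum over the cosets $ST^j$ equals exactly $E_{\ell-1}(z)\cdot\bigl(f|_{2}W_\ell\bigr)\big|U_\ell$: the negative power of $\ell$ from the weight-$(\ell+1)$ automorphy factor is $\ell^{-(\ell-1)/2}$ (after absorbing the $\tfrac{1}{\ell}$ in $U_\ell$), and since $E_{\ell-1}$ is level $1$ its expansion at the cusp $0$ contributes $\ell^{(\ell-1)/2}$; these cancel with no residual factor of $\ell$. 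Using $f|_{2}W_\ell = -f|U_\ell$ (which follows from $M_2(\Sl_2(\mathbb{Z}))=0$, and which is also the clean way to get $\ell$-integrality of $f$ at the cusp $0$), the outcome is
\[
\Tr\bigl(fE_{\ell-1}\bigr)\;=\;fE_{\ell-1}\;-\;E_{\ell-1}\cdot f\big|U_\ell^2\;\equiv\;f - f\big|U_\ell^2 \pmod{\ell},
\]
not $f$. In fact for a weight-$2$ newform one has $f|U_\ell^2=f$ on the nose, so $\Tr(fE_{\ell-1})=0$ identically and the map is not injective.

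The repair, which is exactly what Serre does and what the paper does in its proof of the Drinfeld analogue (Theorem~\ref{tracetheorem}), is to take $g_{(0)}$ to be a \emph{level-$\ell$} modification:
\[
g_{(0)} \;=\; E_{\ell-1}\;-\;\ell^{(\ell-1)/2}\,E_{\ell-1}\big|_{\ell-1}W_\ell \;=\; E_{\ell-1}\;-\;\ell^{\,\ell-1}\,E_{\ell-1}\big|V_\ell.
\]
This still has $g_{(0)}\equiv 1\pmod{\ell}$, but now $g_{(0)}|_{\ell-1}W_\ell=\ell^{(\ell-1)/2}\bigl(E_{\ell-1}|V_\ell-E_{\ell-1}\bigr)$, which has $\ell$-adic valuation at least $(\ell-1)/2+1=(\ell+1)/2$ because $E_{\ell-1}|V_\ell\equiv E_{\ell-1}\equiv 1\pmod{\ell}$. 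That extra $+1$ is precisely what is needed: in the trace formula the non-identity cosets now carry valuation at least $1-(\ell+1)/2+(\ell+1)/2=1$, and $\Tr(fg_{(0)})\equiv f\pmod{\ell}$ follows. With this corrected $g_{(0)}$, your injectivity argument and the dimension comparison for surjectivity are fine and match the paper's argument in the Drinfeld case.
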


In \cite[Proposition 5.2]{ahlgrenpapa}, Ahlgren and Papanikolas generalize Lemma 3 from \cite{ahlgrenono} and show the following:

 \begin{theoremun}
 Suppose that $f  \in S_k(\Gamma_0(\ell))$ has rational, $\ell$-integral $q$-series coefficients, with leading coefficient 1, and that $f | [W_{\ell}] = \pm f$, where $[W_{\ell}]$ is the Fricke involution. Let $\widetilde{\N(f)}$ be the multiple of
 \begin{equation*}
 \N(f) = \prod_{\gamma \in \Gamma_0(\ell)\backslash \Sl_2(\mathbb{Z})} f | [\gamma]
 \end{equation*}
 that has leading coefficient $1$. Then $\widetilde{\N(f)} \in S_{k(\ell+1)}(\Sl_2(\mathbb{Z}))$, $\widetilde{\N(f)}$ has rational, $\ell$-integral $q$-series coefficients, and
 \begin{equation*}
\widetilde{\N(f)} \equiv f^2 \pmod{\ell}.
\end{equation*}
\end{theoremun}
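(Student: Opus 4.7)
The plan is to carry this out in three stages: establish the modularity and cuspidality of $\N(f)$, derive a clean product formula for $\widetilde{\N(f)}$ using the Fricke hypothesis, and then deduce the mod-$\ell$ congruence from a classical Frobenius/norm identity. For the first stage, since $[\Sl_2(\mathbb{Z}) : \Gamma_0(\ell)] = \ell+1$, the weight is as claimed. Invariance of $\N(f)$ under the slash action of any $\sigma \in \Sl_2(\mathbb{Z})$ is automatic because right multiplication by $\sigma$ permutes the cosets of $\Gamma_0(\ell)$, while $f$ is fixed by the left action of $\Gamma_0(\ell)$. The identity coset contributes a factor of $f$, so $\N(f)$ vanishes at the unique cusp $\infty$ of $\Sl_2(\mathbb{Z})$; hence $\widetilde{\N(f)} \in S_{k(\ell+1)}(\Sl_2(\mathbb{Z}))$ as a nonzero scalar multiple of $\N(f)$.

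For the second stage, I would take coset representatives $\gamma_\infty = I$ and $\gamma_j = \begin{pmatrix} 0 & -1 \\ 1 & j \end{pmatrix}$ for $j = 0, 1, \dots, \ell-1$, indexed by the points of $\mathbb{P}^1(\mathbb{F}_\ell)$. The key factorization is
\begin{equation*}
\ell \cdot \gamma_j \;=\; W_\ell \cdot \begin{pmatrix} 1 & j \\ 0 & \ell \end{pmatrix},
\end{equation*}
and since scalar matrices act trivially in the weight-$k$ slash action and $f|[W_\ell] = \pm f$, this gives
\begin{equation*}
f|[\gamma_j](\tau) \;=\; \pm \ell^{-k/2}\, f\bigl((\tau+j)/\ell\bigr).
\end{equation*}
Taking the product over $j$ and tracking the leading $q$-coefficient (using $\prod_{j=0}^{\ell-1} \zeta_\ell^j = 1$ for $\ell$ odd), the leading coefficient of $\N(f)$ works out to $\pm \ell^{-k\ell/2}$, so after normalizing to leading coefficient $1$ the signs and powers of $\ell$ cancel and one obtains
\begin{equation*}
\widetilde{\N(f)}(\tau) \;=\; f(\tau) \prod_{j=0}^{\ell-1} f\bigl((\tau+j)/\ell\bigr).
\end{equation*}

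For the final stage, write $f(\tau) = F(q)$. The product $\prod_{j=0}^{\ell-1} F(\zeta_\ell^j q^{1/\ell})$ is Galois-invariant under $\operatorname{Gal}(\mathbb{Q}(\zeta_\ell)((q^{1/\ell}))/\mathbb{Q}((q)))$, hence a priori lies in $\mathbb{Z}_{(\ell)}[[q]]$. Modulo the prime $(1-\zeta_\ell)$ of $\mathbb{Z}[\zeta_\ell]$ above $\ell$ each factor collapses to $F(q^{1/\ell})$, so the product reduces to $F(q^{1/\ell})^\ell \equiv F(q) \pmod{\ell}$ by the Frobenius identity in $\mathbb{F}_\ell[[q^{1/\ell}]]$. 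Because $(1-\zeta_\ell) \cap \mathbb{Z}_{(\ell)} = \ell\,\mathbb{Z}_{(\ell)}$, the combined congruence descends to $\prod_{j=0}^{\ell-1} f((\tau+j)/\ell) \equiv f \pmod{\ell}$ in $\mathbb{Z}_{(\ell)}[[q]]$, and multiplying by $f$ yields $\widetilde{\N(f)} \equiv f^2 \pmod{\ell}$. The $\ell$-integrality of the $q$-expansion of $\widetilde{\N(f)}$ falls out of the same computation.

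The main obstacle I expect is the bookkeeping in the second stage: the $\ell^{k/2}$ factors from the Fricke normalization, the automorphy factors $(c\tau+d)^{-k}$, and the signs from $f|[W_\ell] = \pm f$ must conspire so that the leading coefficient of $\N(f)$ is exactly $\pm \ell^{-k\ell/2}$ and $\widetilde{\N(f)}$ equals the clean product above, rather than that product multiplied by a fractional power of $\ell$. Once this formula is in hand, the congruence is an immediate application of the classical identity $\prod_{j=0}^{\ell-1} F(\zeta_\ell^j X) \equiv F(X)^\ell \pmod{(1-\zeta_\ell)}$.
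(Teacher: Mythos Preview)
Your proposal is correct. Note, however, that the paper does not itself prove this classical statement---it is quoted from Ahlgren--Papanikolas. What the paper proves is the Drinfeld analogue (Theorem~\ref{normtheorem}), so the natural comparison is with that argument.

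Your first two stages mirror the paper's treatment exactly: the same coset representatives, the same factorization through $W_{\mathfrak{p}}$ (resp.\ $W_\ell$), and the same normalized product formula $\widetilde{\N(f)} = f \prod f\bigl((z+\lambda)/\pi\bigr)$; this is the content of Proposition~\ref{normformula}. Your bookkeeping of the leading coefficient is fine (for $\ell=2$ an extra sign $(-1)^{n_0}$ appears, but it is $\equiv 1\pmod 2$, so nothing changes). Where you genuinely diverge is in stage three. You observe that modulo the prime $(1-\zeta_\ell)$ above $\ell$ all the substitutions $q \mapsto \zeta_\ell^j q^{1/\ell}$ collapse to $q \mapsto q^{1/\ell}$, so the product becomes $F(q^{1/\ell})^\ell \equiv F(q)$ by Frobenius, and then descend the congruence to $\mathbb{Z}_{(\ell)}$. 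The paper instead expands the product in terms of monomial symmetric functions $m_\mu$ in the roots of $\rho_\pi(x)-1/u$, notes that the elementary symmetric functions $e_r$ of these roots vanish modulo $\mathfrak{p}$ for $1\le r<q^d$ (Corollary~\ref{congprop}), and invokes a combinatorial lemma on change-of-basis coefficients (Lemma~\ref{conglemma}) to kill all cross-terms.

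Your route is shorter and more conceptual in the classical case: the cyclotomic structure of the $q$-parameter makes the Frobenius collapse transparent. The paper's symmetric-function argument is heavier but is what one is led to in the Drinfeld setting, where the parameter $u$ does not transform by a simple root-of-unity twist under $z\mapsto (z+\lambda)/\pi$. That said, your idea does transport: since $\rho_\pi(x)\equiv x^{q^d}\pmod{\mathfrak p}$, the roots $\gamma_j$ of $\rho_\pi(x)-1/u$ all reduce to the same purely inseparable $q^d$-th root of $1/u$, and then $\prod_j f(1/\gamma_j)\equiv f(u^{1/q^d})^{q^d}=f(u)$ by Frobenius. So your argument would in fact yield an alternative, more streamlined proof of Theorem~\ref{normtheorem} as well, bypassing Lemma~\ref{conglemma} entirely.
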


The Drinfeld setting offers for function fields constructions analogous to elliptic curves, modular forms, and modular curves in the classical setting. It is therefore natural to wonder if analogous theorems hold in the Drinfeld setting, and in this paper we show that they do.

For the remainder of this work, we now let $q$ be a power of a prime $p$, rather than the classical $e^{2\pi i z}$ above. For $\mathbb{F}_q$ the finite field of order $q$, we set $A=\mathbb{F}_q[T]$ and $K=\mathbb{F}_q(T)$. We will fix a monic prime polynomial $\pi(T) \in A$, of degree $d$, and denote by $\mathfrak{p}$ the ideal generated by this polynomial. Then we have:

\begin{theorem}\label{tracetheorem}
Let $q\geq 3$. There is a one-to-one correspondence between forms of weight $2$ and type $1$ for $\Gamma_0(\mathfrak{p})$ with rational $\pi$-integral $u$-series coefficients and forms of weight $q^d+1$ and type $1$ for $\Gl_2(A)$ with rational $\pi$-integral $u$-series coefficients. Furthermore, define
\begin{equation*}
\Tr(f) = \sum_{\gamma \in \Gamma_0(\mathfrak{p})\backslash \Gl_2(A)} f | [\gamma],
\end{equation*}
then the correspondence is given by the map
\begin{equation*}
f  \equiv \Tr(fg_{(0)}) \pmod{\mathfrak{p}}
\end{equation*}
where $g_{(0)}$ is an auxiliary form such that $g_{(0)} \equiv 1 \pmod{\mathfrak{p}}$. 
\end{theorem}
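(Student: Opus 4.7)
The plan is to adapt Serre's classical argument to the Drinfeld setting. The structural input is an auxiliary form $g_{(0)}$ for $\Gl_2(A)$ of weight $q^d - 1$ and type $0$, with $\pi$-integral $u$-series coefficients satisfying $g_{(0)} \equiv 1 \pmod{\mathfrak{p}}$---the Drinfeld analog of Serre's $E_{\ell - 1}$. Its existence I would establish from Gekeler's theory of Drinfeld modular forms mod $\mathfrak{p}$, either via an appropriately normalized Eisenstein-type series of weight $q^d - 1$ or via a suitable power of Gekeler's weight-$(q-1)$ form $g$, since $(q-1) \mid (q^d - 1)$.

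Once $g_{(0)}$ is constructed, the map $f \mapsto \Tr(fg_{(0)})$ carries $M_2(\Gamma_0(\mathfrak{p}))$ into $M_{q^d+1}(\Gl_2(A))$: multiplication by $g_{(0)}$ stays inside forms for $\Gamma_0(\mathfrak{p})$ while raising the weight, and the trace---a finite sum over a transversal of $\Gamma_0(\mathfrak{p}) \backslash \Gl_2(A)$ parametrized by $\mathbb{P}^1(A/\mathfrak{p})$---descends to a form for the full modular group. The first technical step is to verify that $\Tr$ preserves $\pi$-integrality of $u$-series coefficients. I would do this by choosing explicit coset representatives and computing how each slash operator acts on the $u$-expansion at $\infty$; the hypothesis $q \geq 3$ should intervene here, controlling the scalar units from $\mathbb{F}_q^\times$ that appear in the action.

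For injectivity modulo $\mathfrak{p}$, suppose $f \in M_2(\Gamma_0(\mathfrak{p}))$ has $\pi$-integral $u$-series and $\Tr(fg_{(0)}) \equiv 0 \pmod{\mathfrak{p}}$. Using $g_{(0)} \equiv 1 \pmod{\mathfrak{p}}$, this reduces to $\Tr(f) \equiv 0 \pmod{\mathfrak{p}}$ on $u$-expansions. The identity coset contributes $f$ itself, while the remaining $q^d$ cosets, after reduction mod $\mathfrak{p}$, act through Frobenius-type substitutions producing $u$-series involving only $u^{q^d}$-powers. Isolating the ``non-Frobenius'' part of the $u$-expansion of $\Tr(f)$ should then force $f \equiv 0 \pmod{\mathfrak{p}}$.

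For surjectivity, I would exploit that $[\Gl_2(A) : \Gamma_0(\mathfrak{p})] = q^d + 1$ reduces to $1$ modulo $\mathfrak{p}$, since $q^d$ is a power of the residue characteristic. Hence any $F \in M_{q^d+1}(\Gl_2(A))$, viewed as a form for $\Gamma_0(\mathfrak{p})$, satisfies $\Tr(F) = (q^d+1)F \equiv F \pmod{\mathfrak{p}}$. It then suffices to produce $f \in M_2(\Gamma_0(\mathfrak{p}))$ with $fg_{(0)} \equiv F \pmod{\mathfrak{p}}$. The formal quotient $F/g_{(0)}$ has $\pi$-integral $u$-series congruent to that of $F$ mod $\mathfrak{p}$, and the main obstacle is to show that this reduction is realized by an actual holomorphic weight-$2$ Drinfeld modular form for $\Gamma_0(\mathfrak{p})$. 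I expect to handle this by a dimension comparison between the mod-$\mathfrak{p}$ reductions of $M_2(\Gamma_0(\mathfrak{p}))$ and $M_{q^d+1}(\Gl_2(A))$, combined with the injectivity already established, or by an explicit construction invoking the structure of Gekeler's graded ring of mod-$\mathfrak{p}$ Drinfeld modular forms.
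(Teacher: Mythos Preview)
Your outline has the right architecture---auxiliary form, trace, dimension count---but two load-bearing steps are misidentified and the argument as written would not close.

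\textbf{The auxiliary form.} Taking $g_{(0)}$ to be a level-one form such as $g_d$ (or a power of $g$) is not enough. The paper sets
\[
g_{(0)} \;=\; g_d \;-\; \pi^{(q^d-1)/2}\, g_d\big|_{q^d-1,0}[W_{\mathfrak{p}}],
\]
a form for $\Gamma_0(\mathfrak{p})$, and the point of the subtracted term is to force $v_{\mathfrak{p}}\bigl(g_{(0)}|[W_{\mathfrak{p}}]\bigr) \geq (q^d-1)/2 + 1$. Feeding this into the explicit trace formula $\Tr(F)=F+\pi^{1-k/2}\bigl(F|[W_{\mathfrak{p}}]\bigr)|U_{\mathfrak{p}}$ is exactly what yields $v_{\mathfrak{p}}\bigl(\Tr(fg_{(0)})-f\bigr)\geq 1$. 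If instead $g_{(0)}=g_d$, the same computation gives only $v_{\mathfrak{p}}\bigl(\Tr(fg_d)-f\bigr)\geq \min\{1,\,v_{\mathfrak{p}}(f|[W_{\mathfrak{p}}])\}$, and the second quantity is in general only $\geq 0$; the congruence fails. Relatedly, your description of the non-identity cosets as acting by ``Frobenius-type substitutions producing $u^{q^d}$-powers'' is not correct: after the $W_{\mathfrak{p}}$ factor they combine into a $U_{\mathfrak{p}}$-expression, not a $V_{\mathfrak{p}}$-expression, and there is no mod-$\mathfrak{p}$ sparsity to exploit.

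\textbf{The hypothesis $q\geq 3$.} This is not about scalar units. It guarantees $M_{2,1}(\Gl_2(A))=0$ (for $q\geq 4$ there are no weight-$2$ forms; for $q=3$ they all have type $0$). Applying the trace formula to $f|[W_{\mathfrak{p}}]$ then gives $f|[W_{\mathfrak{p}}]+f|U_{\mathfrak{p}}=0$, whence $f|[W_{\mathfrak{p}}]=-f|U_{\mathfrak{p}}$ is $\pi$-integral. This is how integrality at the cusp $0$ is controlled, and it is a genuine ingredient---without it one cannot even assert that $\Tr(fg_{(0)})$ has $\pi$-integral coefficients. Once the congruence $f\equiv\Tr(fg_{(0)})\pmod{\mathfrak{p}}$ is in hand, the bijection follows from the dimension equality $\dim M_{2,1}(\Gamma_0(\mathfrak{p}))=\dim M_{q^d+1,1}(\Gl_2(A))$, as you anticipate at the end.
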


And also

\begin{theorem}\label{normtheorem}
Let $f$ be a Drinfeld modular form for $\Gamma_0(\mathfrak{p})$ with integral $u$-series coefficients, and such that the leading coefficient of its $u$-series expansion is 1. Suppose further that $f$ is an eigenform of the Fricke involution. Let $\widetilde{\N(f)}(z)$ be the multiple of
 \begin{equation*}
 \N(f) =  \prod_{\gamma \in \Gamma_0(\mathfrak{p})\backslash \Gl_2(A)} f | [\gamma]
 \end{equation*}
 that has leading coefficient $1$. Then $\widetilde{\N(f)}$ has integral $u$-series coefficients and
\begin{equation*}
\widetilde{\N(f)} \equiv f^2 \pmod{\mathfrak{p}}.
\end{equation*}
\end{theorem}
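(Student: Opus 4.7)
\emph{Plan.} I would follow the classical argument of Ahlgren and Papanikolas, adapted to the Drinfeld setting. First I would choose a system of $q^d + 1$ coset representatives for $\Gamma_0(\mathfrak{p}) \backslash \Gl_2(A)$: one given by the identity (corresponding to the cusp at infinity), and the remaining $q^d$ obtained from the Fricke involution $W_\mathfrak{p}$ composed with a translation indexed by $b$ ranging over $A/\mathfrak{p}$. Using the hypothesis $f\,|\,[W_\mathfrak{p}] = \epsilon f$ with $\epsilon = \pm 1$, each of the $q^d$ nontrivial factors in $\N(f)$ becomes $\epsilon$ times a shifted copy $f_{(b)}$ of $f$, multiplied by an explicit scalar coming from the determinant factor in the slash operator. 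This yields
\[
\N(f) \;=\; \epsilon^{q^d}\, c(\pi)\, f \cdot \prod_{b \in A/\mathfrak{p}} f_{(b)}
\]
for some explicit (and a priori non-$\pi$-integral) scalar $c(\pi)$.

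Next I would compute the leading $u$-coefficient of $\N(f)$. Since $f$ has leading coefficient $1$ and each $f_{(b)}$ has an explicitly computable leading term, the normalization $\widetilde{\N(f)}$ should exactly cancel both the sign $\epsilon^{q^d}$ and the scalar $c(\pi)$. After this cancellation, $\widetilde{\N(f)} = f \cdot \prod_b f_{(b)}$, and $\pi$-integrality of the coefficients follows from the fact that the product $\prod_b f_{(b)}$ is invariant under the Galois-type action permuting the $b$'s, and hence lies in the ring of $\pi$-integers.

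The last, and most important, step is the congruence modulo $\mathfrak{p}$. The Drinfeld analog of the classical identity $\prod_{j=0}^{\ell-1} f\!\left(\tfrac{z+j}{\ell}\right) \equiv f(z) \pmod \ell$ should read
\[
\prod_{b \in A/\mathfrak{p}} f_{(b)}(z) \;\equiv\; f(z) \pmod{\mathfrak{p}}.
\]
In the classical case this follows from $F(\zeta^j X) \equiv F(X) \pmod{1-\zeta}$ (where $F$ encodes the $q$-expansion and $X = q^{1/\ell}$), combined with Fermat's congruence $a^\ell \equiv a \pmod \ell$ applied coefficientwise. The Drinfeld analog uses the Fermat-style identity $\prod_{b \in A/\mathfrak{p}}(X - b) \equiv X^{q^d} - X \pmod{\mathfrak{p}}$, together with the Carlitz-exponential description of how the shift by the $b$-th coset representative acts on the $u$-variable. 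Substituting into the expression for $\widetilde{\N(f)}$ from the previous step yields $\widetilde{\N(f)} \equiv f^2 \pmod{\mathfrak{p}}$.

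The main obstacle will be this final step: identifying the correct Drinfeld Frobenius identity that collapses the product of shifts of $f$ to $f$ modulo $\mathfrak{p}$, while tracking carefully the leading coefficients, the sign $\epsilon$, and the scalars coming from the slash operator, so that both the $\pi$-integrality of $\widetilde{\N(f)}$ and the congruence with $f^2$ are obtained simultaneously. The weight bookkeeping $k(q^d+1) \equiv 2k \pmod{q^d-1}$ is the reason one can hope for such a congruence in the first place, and it must be matched by an explicit equality at the level of $u$-series.
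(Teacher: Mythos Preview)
Your overall architecture matches the paper's exactly: choose the $q^d+1$ coset representatives as you describe, use the Fricke eigenform hypothesis to collapse the norm to $\pi^{-q^d k/2}\, f\cdot\prod_{\lambda}f\bigl(\tfrac{z+\lambda}{\pi}\bigr)$ (this is the paper's Proposition~\ref{normformula}), observe that the product has leading coefficient $1$ so that $\widetilde{\N(f)}=f\cdot\prod_{\lambda}f\bigl(\tfrac{z+\lambda}{\pi}\bigr)$, and then reduce everything to the single congruence $\prod_{\lambda}f\bigl(\tfrac{z+\lambda}{\pi}\bigr)\equiv f\pmod{\mathfrak{p}}$.

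The gap is in your last step. The classical argument you invoke works because the shifts act on the uniformizer by \emph{multiplication} by $\ell$-th roots of unity, so the product $\prod_j F(\zeta^j X)$ is visibly a polynomial in $X^\ell$ and the congruence drops out of $(1-\zeta)\mid\ell$. In the Drinfeld setting the quantities $u\bigl(\tfrac{z+\lambda}{\pi}\bigr)$ are not multiplicative twists of a single parameter; they are the \emph{reciprocals of the roots} $\gamma_j$ of $\rho_\pi(x)-\tfrac{1}{u}$, and there is no direct analogue of the root-of-unity factorisation. The identity $\prod_{b}(X-b)\equiv X^{q^d}-X\pmod{\mathfrak{p}}$ you cite does not see these $\gamma_j$ at all. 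What the paper uses instead is Corollary~\ref{congprop}, giving $\rho_\pi(x)\equiv x^{q^d}\pmod{\mathfrak{p}}$, so that the elementary symmetric functions $e_r(\gamma)$ vanish mod $\mathfrak{p}$ for $1\le r\le q^d-1$; the product $\prod_j f(1/\gamma_j)$ is then expanded in the monomial symmetric basis $m_\mu(\gamma)$, and a change-of-basis argument between the $m_\mu$ and the $e_\nu$ (Lemma~\ref{conglemma}, a purely combinatorial statement about divisibility of transition coefficients by $p$) kills every cross term. This symmetric-function step is the genuine content and is not a routine transcription of the classical Fermat trick; your plan would need to be rewritten around it.
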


As in Serre's classical work on this subject, Theorem \ref{tracetheorem} is a corollary of a more general result, which we give here since it is of independent interest.

\begin{reptheorem}{tracemap}
Let $f$ be a Drinfeld modular form of weight $k$ and type $l$ for $\Gamma_0(\mathfrak{p})$, with rational $u$-series coefficients. Then $f$ is a $\mathfrak{p}$-adic Drinfeld modular form for $\Gl_2(A)$.
\end{reptheorem}

This paper is organized as follows: In Section 2 we present basic facts on Drinfeld modular forms to establish our notation. In Section 3, we perform some computation to get formulae and integrality results for some operators we will need. In Section 4 we prove Theorem \ref{tracemap}, from which we will obtain Theorem \ref{tracetheorem}. Finally, in Section 5 we show Theorem \ref{normtheorem}.

\section*{Acknowledgements}

Some of this work is part of the author's PhD thesis, and the author is grateful to her adviser Ken Ono, who suggested the problem. The author also thanks Brian Conrad and Samit Dasgupta for helpful conversations. Finally, the author thanks the referees for their careful reading of the paper and thoughtful suggestions for improvement.

\section{Preliminaries and notation}

Recall from the Introduction that we set $A = \mathbb{F}_q[T]$, where $q$ is a power of a prime $p$, and $K = \mathbb{F}_q(T)$. We complete $K$ at the infinite place $v_{\infty}(x)=-\operatorname{deg}(x)$, and write $K_{\infty}=\mathbb{F}_q(\!(1/T)\!)$ for the completion of $K$ at this place. We will also write
\begin{equation*}
C=\hat{\bar{K}}_{\infty}
\end{equation*}
for the completed algebraic closure of $K_{\infty}$, and $\Omega=\mathbb{P}^1(C)-\mathbb{P}^1(K_{\infty})=C - K_{\infty}$. $\Omega$ has a rigid analytic structure described in \cite{gekelerjacobian}, and we call it the Drinfeld upper half-plane. The group $\Gl_2(A)$ acts on $\Omega$ by fractional linear transformations.

\subsection{The Carlitz module}

Let $\End_{C,\mathbb{F}_q}(\mathbb{G}_a)$ be the ring of $\mathbb{F}_q$-linear morphisms of the additive group scheme $\mathbb{G}_a$ that are defined over the field $C$. Then $\End_{C,\mathbb{F}_q}(\mathbb{G}_a)$ is the ring of polynomials of the form
\begin{equation*}
\sum a_i x^{q^i}, \qquad a_i \in C,
\end{equation*}
where the ring multiplication is composition.

We will need Carlitz's module $\rho$ of rank 1, first studied by Carlitz in~\cite{carlitz4}. It is the ring homomorphism
\begin{align*}
\rho \colon A &\to \End_{C,\mathbb{F}_q}(\mathbb{G}_a) \\
a &\mapsto \rho_a
\end{align*}
which sends $T$ to the polynomial
\begin{equation}\label{carlitz}
\rho_T(x) = T x+ x^q.
\end{equation}

One can show that there is a unique rank $1$ $A$-lattice $L$, and a rigid analytic, entire, surjective, $\mathbb{F}_q$-linear, $L$-periodic function $e_L \colon C \to C$ given by the product
\begin{equation*}
e_{L}(z) = z\prod_{\substack{\lambda \in L \\ \lambda \neq 0}}\left(1-\frac{z}{\lambda}\right)
\end{equation*}
such that for each $a \in A$, the following diagram commutes:
\begin{equation*}
\begin{CD}
0 @>>> L @>>> C @>e_{L}>> C  @>>> 0 \\
 @.   @VV \cdot a  V    @V V  \cdot_a  V       @V V \rho_a V\\
0 @>>> L  @>>> C @>e_{L}>> C @>>> 0, 
\end{CD}     
\end{equation*}
where $\cdot a$ denotes the usual multiplication by $a$ on $C$.

We write $L=\tilde{\pi}A$, where the \emph{Carlitz period} $\tilde{\pi}\in K_{\infty}(\sqrt[q-1]{-T})$ is defined up to a $(q-1)$th root of unity. We choose one such $\tilde{\pi}$ and fix it for the remainder of this work.

For $\pi \in A$ a monic prime polynomial of degree $d$ as in the Introduction, we have
\begin{equation}\label{rho}
\rho_{\pi}(x) =\pi x +\sum_{1\leq i \leq d-1}\alpha_i x^{q^i}+x^{q^d}
\end{equation}
with each $\alpha_i$ in $A$. 

We have:

\begin{lemma}[Hayes, \cite{hayes}, Proposition 2.4]
For any positive integer $n$ and prime $\pi \in A$, the polynomial
\begin{equation*}
\frac{\rho_{\pi^n}(x)}{\rho_{\pi^{n-1}}(x)} \in A[x]
\end{equation*} 
is Eisenstein.
\end{lemma}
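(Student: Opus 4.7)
The plan is to exploit the composition rule $\rho_{\pi^n} = \rho_{\pi} \circ \rho_{\pi^{n-1}}$ to write the quotient $P(x) := \rho_{\pi^n}(x)/\rho_{\pi^{n-1}}(x)$ in a useful explicit form and then verify the three Eisenstein conditions directly. Setting $y := \rho_{\pi^{n-1}}(x) \in A[x]$ and substituting $y$ for $x$ in (\ref{rho}) yields
\[
\rho_{\pi^n}(x) = \rho_{\pi}(y) = \pi y + \alpha_1 y^q + \cdots + \alpha_{d-1} y^{q^{d-1}} + y^{q^d}.
\]
Because $\rho_{\pi^{n-1}}(0)=0$, the polynomial $y$ is divisible by $x$ in $A[x]$, and dividing through produces
\[
P(x) = \pi + \sum_{i=1}^{d-1} \alpha_i\, \rho_{\pi^{n-1}}(x)^{q^i - 1} + \rho_{\pi^{n-1}}(x)^{q^d - 1} \in A[x],
\]
which is visibly monic of degree $(q^d - 1)q^{(n-1)d}$ with $P(0) = \pi$. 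Two of the three Eisenstein conditions are thereby immediate: the leading coefficient is $1$, and the constant term $\pi$ lies in $\mathfrak{p}$ but not in $\mathfrak{p}^2$.

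To finish I would show that every intermediate coefficient of $P(x)$ is divisible by $\pi$. The crucial input is the congruence
\[
\rho_{\pi}(x) \equiv x^{q^d} \pmod{\pi},
\]
equivalently $\pi \mid \alpha_i$ for $1 \le i \le d-1$. Granting this, a short induction on $n$ (using that $\rho_{\pi}$ is additive and that raising to the $q^d$-th power is additive in characteristic $p$) yields $\rho_{\pi^{n-1}}(x) \equiv x^{q^{(n-1)d}} \pmod{\pi}$. Substituting into the formula for $P(x)$ above then gives $P(x) \equiv x^{(q^d - 1)q^{(n-1)d}} \pmod{\pi}$, so every non-leading coefficient of $P(x)$ lies in $\pi A$, completing the Eisenstein check.

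The heart of the argument is thus the congruence $\rho_{\pi}(x) \equiv x^{q^d} \pmod{\pi}$, and this is where I expect the main obstacle to lie. The cleanest justification is structural: the mod-$\pi$ reduction $\bar{\rho}$ is a rank-$1$ Drinfeld $A$-module over the field $A/\pi$ of $\pi$-characteristic, and any rank-$1$ Drinfeld module has height equal to its rank, so $\bar{\rho}$ is supersingular at $\pi$ and its $\pi$-torsion is trivial. Consequently the monic polynomial $\bar{\rho}_{\pi}(x)$ of degree $q^d$ has $0$ as its only root and must equal $x^{q^d}$. A more hands-on alternative, closer to Carlitz's original treatment, is to use the explicit formulae for the $\alpha_i$ in terms of products involving the Carlitz factorials and deduce the divisibility $\pi \mid \alpha_i$ for $0 < i < d$ directly from the irreducibility of $\pi$ of degree $d$.
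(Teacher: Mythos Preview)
The paper does not supply its own proof of this lemma: it is simply quoted from Hayes (Proposition~2.4) and then used to derive Corollary~\ref{congprop}. So there is no argument in the paper to compare your proposal against.

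Your proof is correct and self-contained. Two remarks are worth making. First, be aware of the direction of implication in the paper: there, the congruence $\rho_\pi(x)\equiv x^{q^d}\pmod{\mathfrak p}$ is \emph{deduced from} Hayes's lemma (this is exactly Corollary~\ref{congprop}), whereas you use it as the key input to \emph{prove} the lemma. This is not circular, because you establish the congruence independently via the height argument for rank-one Drinfeld modules in characteristic $\pi$; but you should flag explicitly that you are proving the $n=1$ case first by a separate method and then bootstrapping. Second, your structural justification can be phrased very concretely without invoking the word ``height'': the set of roots of $\bar\rho_\pi(x)$ in an algebraic closure of $A/\mathfrak p$ is an $A/\mathfrak p$-vector space (since $A$ acts through $\bar\rho$ and $\pi$ kills it), hence has cardinality a power of $q^d$; on the other hand, if $j\ge 1$ is the least index with $\bar\alpha_j\ne 0$, this root set has exactly $q^{d-j}$ elements, forcing $j=d$. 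That is all the ``supersingularity'' amounts to here, and writing it this way keeps the argument at the same elementary level as the rest of your proof.
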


\begin{corollary}\label{congprop}
If $\pi \in A$ is prime, $\frac{\rho_{\pi}(x)}{x}$ is irreducible, with coefficients in $A$. Furthermore,
\begin{equation*}
\rho_{\pi}(x) \equiv x^{q^d} \pmod{\mathfrak{p}},
\end{equation*}
where $\mathfrak{p}$ is the ideal generated by $\pi$.
\end{corollary}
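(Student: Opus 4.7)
The plan is to deduce everything from Hayes's lemma applied at $n=1$. Setting $n=1$, the lemma asserts that
\[
\frac{\rho_{\pi}(x)}{\rho_{1}(x)} \;=\; \frac{\rho_{\pi}(x)}{x}
\]
is an Eisenstein polynomial in $A[x]$ with respect to the prime $\pi$. This immediately gives both claims.

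For the irreducibility claim, I would simply invoke the standard consequence of the Eisenstein criterion: any Eisenstein polynomial in $A[x]$ is irreducible in $K[x]$ (where $K = \mathbb{F}_q(T)$), since $A$ is a PID with $\pi A$ prime. Because $\rho_{\pi}(x)/x$ is monic, hence primitive in $A[x]$, Gauss's lemma upgrades this to irreducibility in $A[x]$ itself.

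For the congruence, I would unpack the definition of Eisenstein directly on the explicit expansion from \eqref{rho}. Dividing by $x$ gives
\[
\frac{\rho_{\pi}(x)}{x} \;=\; \pi \;+\; \sum_{i=1}^{d-1} \alpha_i \, x^{q^i-1} \;+\; x^{q^d-1},
\]
and the Eisenstein condition says that every coefficient other than the leading one lies in $\mathfrak{p}$, while the constant term does not lie in $\mathfrak{p}^2$. The constant term is $\pi$, which is trivially in $\mathfrak{p}\setminus\mathfrak{p}^2$, so the only new information is that each $\alpha_i \in \mathfrak{p}$. Multiplying back by $x$ and reducing modulo $\mathfrak{p}$ kills every term except $x^{q^d}$, yielding $\rho_{\pi}(x) \equiv x^{q^d} \pmod{\mathfrak{p}}$ as desired.

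There is no real obstacle here; the entire corollary is a direct reading of the preceding lemma combined with the textbook fact that Eisenstein polynomials are irreducible. The only thing worth being a little careful about is ensuring the move from irreducibility over $K$ to irreducibility over $A$, which is handled by Gauss's lemma together with monicity.
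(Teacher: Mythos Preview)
Your proof is correct and follows essentially the same approach as the paper: apply Hayes's lemma at $n=1$ (noting $\rho_1(x)=x$) to see that $\rho_\pi(x)/x$ is Eisenstein at $\pi$, then read off irreducibility and the congruence. The paper's version is terser---it simply says the assertions follow from Hayes's lemma---while you spell out the Eisenstein-criterion and Gauss's-lemma steps and explicitly unpack the congruence, but the substance is identical.
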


\begin{proof}
That $\rho_{\pi}(x)$ has integral coefficients is clear from its definition, from which it follows that $\frac{\rho_{\pi}(x)}{x}$ also does. Since $\rho_{1}(x)=x$, the other assertions follow from Hayes's Lemma above.
\end{proof}

\subsection{Drinfeld modular forms}

In this section, when we refer to rigid analytic objects we will mean rigid analytic in the sense of \cite{fresnel}. For a more complete reference on Drinfeld modules and Drinfeld modular forms, we refer the reader to Gekeler's excellent \emph{Inventiones} paper \cite{gekeler}, or to the author's PhD thesis \cite{vincentthesis}. 

Recall that we use $\mathfrak{p}$ to denote a prime ideal of $A$. For any such ideal, we define
\begin{equation*}
\Gamma_0(\mathfrak{p})\defi \left\{
\begin{pmatrix}
a & b \\
c & d
\end{pmatrix}
\in \Gl_2(A) \mid c \equiv 0 \pmod{\mathfrak{p}} \right\},
\end{equation*}
a subgroup of $\Gl_2(A)$.

Let $\Gamma$ be $\Gl_2(A)$ or $\Gamma_0(\mathfrak{p})$. We may take the quotient $\Gamma \backslash \Omega$, and this space has a rigid analytic structure inherited from that of $\Omega$. Furthermore, there is a smooth affine algebraic curve $M_{\Gamma}$ defined over $C$ such that $\Gamma \backslash \Omega$ is canonically isomorphic to the rigid analytic space associated to $M_{\Gamma}$. The curve $M_{\Gamma}$ can be compactified; in the case where $\Gamma=\Gl_2(A)$ this is done by adding one cusp denoted $\infty$, and in the case where $\Gamma=\Gamma_0(\mathfrak{p})$ this is done by adding two cusps denoted by $0$ and $\infty$. We denote the compactified curve by $X_0(\mathfrak{p})$. An analytic parameter at $\infty$ is given by the analytic function
\begin{equation*}
u(z) = \frac{1}{e_L(\tilde{\pi}z)},
\end{equation*}
where $L$ is the lattice associated to the Carlitz module above, and $\tilde{\pi}$ is the Carlitz period. (This is not completely true: an analytic parameter is given by $u(z)^{q-1}$, but since we will deal with Drinfeld modular forms with type they will have expansions in $u$.)

\begin{definition}\label{modform}
Let $\Gamma$ be $\Gl_2(A)$ or $\Gamma_0(\mathfrak{p})$. A function $f \colon \Omega \rightarrow C$ is called a \emph{Drinfeld modular form of weight $k$ and type $l$ for $\Gamma$}, where $k \geq 0$ is an integer and $l$ is a class in $\mathbb{Z} / (q-1)\mathbb{Z}$, if
\begin{enumerate}
\item for $\gamma = 
\left(\begin{smallmatrix} a&b\\ c&d \end{smallmatrix}\right)
\in \Gamma$, $f(\gamma z)=(\det \gamma)^{-l}(cz+d)^kf(z)$;
\item $f$ is rigid analytic on $\Omega$;
\item \label{cusps} $f$ is analytic at the cusps of $\Gamma$.
\end{enumerate}
\end{definition}

Condition (\ref{cusps}) means that for some -- and therefore any -- analytic parameter at the cusp in question, $f$ has a power series expansion in this parameter with positive radius of convergence. In this paper we shall need to discuss the coefficients of such an expansion, which are not independent of the choice of parameter. For the cusp $\infty$, we fix once and for all the function $u$ above as this parameter.

To describe the expansion at the cusp $0$ for a Drinfeld modular form $f$ for $\Gamma_0(\mathfrak{p})$, we first need to define a slash operator: For any $x \in K_{\infty}^{\times}$, $x$ can be written uniquely as 
\begin{equation}\label{leadingcoeff}
x=\zeta_x \left(\frac{1}{T}\right)^{v_{\infty}(x)}u_x
\end{equation}
where $\zeta_x \in \mathbb{F}_q^{\times}$, and $u_x$ is such that $v_{\infty}(u_x-1)>0$, or in other words $u_x$ is a $1$-unit at $\infty$. We call $\zeta_x$ the \emph{leading coefficient} of $x$.

For $\gamma \in \Gl_2(K)$ we have that $\det \gamma \in K^{\times}$. By (\ref{leadingcoeff}), we can write
\begin{equation*}
\det \gamma= \zeta_{\det \gamma} \left(\frac{1}{T}\right)^{v_{\infty}(\det \gamma)}u_{\det \gamma}.
\end{equation*}
For simplicity we write 
\begin{equation*}
\zeta_{\det \gamma}=\zeta_{\gamma}.
\end{equation*}

We define a \emph{slash operator} for $\gamma = 
\left(\begin{smallmatrix} a&b\\ c&d \end{smallmatrix}\right)
\in \Gl_2(K)$ on a modular form of weight $k$ and type $l$ by
\begin{equation}\label{slash}
f|_{k,l}[\gamma]=\zeta_{\gamma}^l \left(\frac{\det \gamma}{\zeta_{\gamma}}\right)^{k/2}(cz+d)^{-k}f(\gamma z).
\end{equation}
Note that for $\gamma \in \Gl_2(A)$ we have that $\det \gamma =\zeta_{\gamma}$; thus if $f$ is modular of weight $k$ and type $l$ for $\Gamma$ and $\gamma \in \Gamma$, then $f|_{k,l}[\gamma]= f$. 

The matrix
\begin{equation*}
W_{\mathfrak{p}}=
\begin{pmatrix}
0 & -1 \\
\pi & 0
\end{pmatrix}
\end{equation*}
sends the cusp $\infty$ to the cusp $0$ on $X_0(\mathfrak{p})$. We define the $u$-series expansion of a Drinfeld modular form $f$ of weight $k$ and type $l$ for $\Gamma_0(\mathfrak{p})$ at $0$ to be that of the form
\begin{equation*}
f|_{k,l} [W_{\mathfrak{p}}] = \pi^{k/2}(\pi z)^{-k}f\left(\frac{-1}{\pi z}\right)
\end{equation*}
at $\infty$. We take this opportunity to note that the operator $|_{k,l}[W_{\mathfrak{p}}]$ is an involution on the Drinfeld modular forms for $\Gamma_0(\mathfrak{p})$ called the \emph{Fricke involution}.

Finally, when we simply speak of the $u$-series expansion of a form, without specifying a cusp, we will always mean the $u$-series expansion at the cusp $\infty$.

\subsection{Modular forms for $\Gl_2(A)$}

We will need a few facts about the algebra of Drinfeld modular forms for the full modular group $\Gl_2(A)$, which we collect here. 

For $k$ a positive integer and $z \in \Omega$, Goss defines in~\cite{gosseisenstein} an Eisenstein series of weight $q^k-1$ by:
\begin{equation}\label{EF}
g_k \defi (-1)^{k+1}\tilde{\pi}^{1-q^k}L_k\sum_{\substack{a,b\in A\\(a,b)\neq(0,0)}}\frac{1}{(az+b)^{q^k-1}},
\end{equation}
where $L_k$ is the least common multiple of all monics of degree $k$, so that 
\begin{equation*}
L_k=(T^q-T)\ldots(T^{q^k}-T),
\end{equation*}
and $\tilde{\pi}$ is the Carlitz period fixed above. These series converge and thus define rigid analytic functions on $\Omega$. They should be considered the analogues of the classical Eisenstein series, and they can be shown to be modular of weight $q^k-1$ and type $0$ for $\Gl_2(A)$. Finally, it is shown in \cite{gekeler} that with this normalization each $g_k$ has integral $u$-series coefficients.

Another modular form for $\Gl_2(A)$ which will be important in this paper is the Poincar\'{e} series of weight $q+1$ and type 1, first defined by Gerritzen and van der Put in~\cite[page 304]{gerritzen}. Let $H$ be the subgroup 
\begin{equation*}
\left\{ 
\begin{pmatrix}
* & *\\
0 & 1
\end{pmatrix}
\right\} \subset \operatorname{GL}_2(A)
\end{equation*}
and as usual
\begin{equation*}
\gamma=
\begin{pmatrix}
a & b\\
c & d
\end{pmatrix} 
\in \operatorname{GL}_2(A).
\end{equation*}
Then we may define a series
\begin{equation}\label{PC}
h \defi \sum_{\gamma \in H\backslash \operatorname{GL}_2(A)}
\frac{\operatorname{det} \gamma \cdot u(\gamma z)}{(cz+d)^{q+1}}.
\end{equation}
Using the properties of the function $u(z)$, this series can be shown to in fact define a Drinfeld modular form of weight $q+1$ and type $1$. It is shown in \cite{gekeler} that $h$ also has integral $u$-series coefficients.

It is a well-known fact (see for example~\cite{gekeler}) that the graded $C$-algebra of Drinfeld modular forms of all weights and all types for $\Gl_2(A)$ is the polynomial ring $C[g_1,h]$ (where each Drinfeld modular form corresponds to a unique isobaric polynomial). Because of the prominent role of the Drinfeld Eisenstein series of weight $q-1$ in the theory, as is customary we will simply write $g$ instead of $g_1$ from now on.

\subsection{$\mathfrak{p}$-adic Drinfeld modular forms for $\Gl_2(A)$}

As before, $\pi(T) \in A$ is a monic prime polynomial of degree $d$ and we denote by $\mathfrak{p}$ the principal ideal that it generates. For $x \in K$, we write $v_{\mathfrak{p}}(x)$ for the valuation of $x$ at $\mathfrak{p}$.

\begin{definition}
Let $f=\sum_{i=0}^{\infty} c_i u^i$ be a formal series with $c_i \in K$. Then we define the \emph{valuation of $f$ at $\mathfrak{p}$} to be
\begin{equation*}
v_{\mathfrak{p}}(f)=\inf_{i} v_{\mathfrak{p}}(c_i).
\end{equation*}
For two formal series $f=\sum a_iu^i$ and $g=\sum b_i u^i$, we write $f\equiv g \pmod{\mathfrak{p}^m}$ if $v_{\mathfrak{p}}(f-g) \geq m$.
\end{definition}

We note that it was shown in \cite{gekeler} that if $\pi(T)$ is of degree $d$, then
\begin{equation*}
g_d \equiv 1 \pmod{\mathfrak{p}}.
\end{equation*}

Following the definition of Serre \cite{serre}, we define

\begin{definition}
A \emph{$\mathfrak{p}$-adic Drinfeld modular form} is a formal $u$-series expansion $\sum a_j u^j$ such that there exists a sequence $\{f_i\}$ of Drinfeld modular forms for $\Gl_2(A)$ such that $v_{\mathfrak{p}}(f_i-f) \to \infty$ as $i \to \infty$.
\end{definition}

We do not know as yet the extent to which the coefficients of these $\mathfrak{p}$-adic Drinfeld modular forms have nice arithmetic properties.

\section{Operators on $\Gamma_0(\mathfrak{p})$}\label{operators}

\subsection{Integrality of $U_{\mathfrak{p}}$ and $V_{\mathfrak{p}}$}

We begin by introducing two operators relevant to the theory of $\mathfrak{p}$-adic Drinfeld modular forms. As before $\mathfrak{p}$ is a prime ideal generated by a monic prime polynomial $\pi(T)$ of $A$ of degree $d$. For any rigid analytic function $f$ on $\Omega$ with expansion $f=\sum_{i=0}^{\infty}c_iu^i$ at $\infty$ we define:
\begin{equation*}
f|U_{\mathfrak{p}}= \frac{1}{\pi} \sum_{\substack{\lambda \in A \\ \degree \lambda < d}} f\left(\frac{z+\lambda}{\pi}\right),
\end{equation*}
and
\begin{equation*}
f|V_{\mathfrak{p}}= f(\pi z).
\end{equation*}

We will show that if the coefficients $c_i$ are integral, then the $u$-series coefficients of $f|U_{\mathfrak{p}}$ and $f|V_{\mathfrak{p}}$ are also integral and moreover that
\begin{equation*}
v_{\mathfrak{p}}(f|U_{\mathfrak{p}})\geq v_{\mathfrak{p}}(f)
\end{equation*}
and
\begin{equation*}
v_{\mathfrak{p}}(f|V_{\mathfrak{p}})\geq v_{\mathfrak{p}}(f).
\end{equation*}

We first consider the operator $U_{\mathfrak{p}}$. This operator was already studied in \cite{bosser}, where the author determined that the $U_{\mathfrak{p}}$ operator acts in the following manner on the coefficients at $\infty$ of analytic functions on $\Omega$ (we note that Bosser's result is more general and applies to meromorphic functions with a pole of order less than $q^d$ at infinity, but we will only need the version stated here):

\begin{proposition}\label{integralityu}
Let $\mathfrak{p}$ be a prime ideal in $A$ generated by a monic prime polynomial $\pi$ of degree $d$, and let $f$ be an analytic function on $\Omega$. Assume that $f$ has a $u$-series expansion of the form
\begin{equation*}
f=\sum_{i=0}^{\infty} c_i u^i, \qquad c_i \in C.
\end{equation*}
As before we write the Carlitz module evaluated at $\pi$ as $\rho_{\pi}(x)=\pi x+\sum_{1\leq i \leq d}\alpha_i x^{q^i}$. Then $f|U_{\mathfrak{p}}$ has a $u$-series expansion
\begin{equation*}
f|U_{\mathfrak{p}}=\sum_{j=1}^{\infty}a_j u^j
\end{equation*}
with
\begin{equation*}
a_j=\sum_{j\leq n \leq 1+(j-1)q^d} \sum_{\substack{i \in \mathbb{N}^{d+1} \\ i_0+i_1+\ldots+i_d=j-1 \\ i_0+i_1q+ \ldots +i_dq^d=n-1}} {{j-1} \choose i} c_n \alpha_1^{i_1}\ldots\alpha_d^{i_d}\pi^{i_0}.
\end{equation*}
\end{proposition}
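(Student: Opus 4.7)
The plan is to compute the power sums $p_n := \sum_{\lambda} u((z+\lambda)/\pi)^n$ explicitly as formal power series in $u = u(z)$, and then recover $a_j$ by $\mathbb{F}_q$-linearity in $f = \sum_n c_n u^n$. I would begin by using the commutative diagram defining the Carlitz module, together with the $\mathbb{F}_q$-linearity of $e_L$, to write
\[
v_\lambda := u\bigl((z+\lambda)/\pi\bigr) = \frac{1}{w + \lambda_\pi}, \qquad w := e_L(\tilde{\pi} z/\pi), \quad \lambda_\pi := e_L(\tilde{\pi} \lambda/\pi).
\]
Since $\rho_\pi(w + \lambda_\pi) = e_L(\tilde{\pi} z) = 1/u$, and $\lambda_\pi$ runs through the full $\pi$-torsion of the Carlitz module as $\lambda$ ranges over residues of $A$ modulo $\pi$, the $q^d$ values $v_\lambda$ are exactly the roots of
\[
P(V) := V^{q^d}\bigl(1 - u\rho_\pi(1/V)\bigr) = V^{q^d} - u\bigl(\pi V^{q^d - 1} + \alpha_1 V^{q^d - q} + \ldots + \alpha_{d-1} V^{q^d - q^{d-1}} + 1\bigr),
\]
viewed as a polynomial in $V$ over the ring of formal power series in $u$.

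I would then extract $p_n$ from the logarithmic-derivative identity $P'(V)/P(V) = \sum_{n \geq 0} p_n/V^{n+1}$, interpreted as a formal Laurent expansion at $V = \infty$. The key simplification comes from working in characteristic $p$: each derivative of $V^{q^d - q^i}$ for $i \geq 1$ picks up a factor of $q^i \equiv 0$, and the $V^{q^d}$ and constant terms also differentiate trivially, so $P'(V)$ collapses to the single monomial $u\pi V^{q^d - 2}$. Dividing by $P(V)$, expanding $(1 - u\rho_\pi(1/V))^{-1}$ as a geometric series, and applying the multinomial theorem to each $\rho_\pi(1/V)^m$ then gives
\[
\frac{P'(V)}{P(V)} = \frac{\pi u}{V^2}\sum_{m \geq 0} u^m \sum_{i_0 + \ldots + i_d = m} \binom{m}{i_0, \ldots, i_d}\pi^{i_0}\alpha_1^{i_1}\ldots\alpha_d^{i_d} V^{-(i_0 + i_1 q + \ldots + i_d q^d)}.
\]

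Reading off the coefficient of $V^{-n-1}$ forces $i_0 + i_1 q + \ldots + i_d q^d = n - 1$, and setting $j := i_0 + \ldots + i_d + 1$ (the eventual exponent of $u$) yields $p_n$ as an explicit polynomial in $u$. Summing $f|U_\mathfrak{p} = \pi^{-1}\sum_n c_n p_n$, the prefactor $\pi$ in each $p_n$ cancels the $1/\pi$, the range $j \leq n \leq 1 + (j-1)q^d$ is forced by the elementary estimates $\sum i_k \leq \sum i_k q^k \leq q^d \sum i_k$, and collecting terms by $u^j$ produces exactly the stated formula. The one genuinely non-routine step is the characteristic-$p$ collapse of $P'(V)$ to a single monomial, which is what makes the generating function tractable and explains the appearance of pure monomials $\pi^{i_0}\alpha_1^{i_1}\cdots\alpha_d^{i_d}$ rather than some Newton-sum combinations; the rest is bookkeeping to align the multinomial indices with those in the statement.
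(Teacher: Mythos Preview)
Your argument is correct. The paper does not actually prove this proposition; it quotes the result from Bosser \cite{bosser} without reproducing the proof, so there is no in-paper argument to compare against. Your approach via the generating polynomial $P(V)=V^{q^d}\bigl(1-u\,\rho_\pi(1/V)\bigr)$ and the logarithmic-derivative identity for power sums is clean, and the key observation---that in characteristic $p$ every exponent of $V$ in $P(V)$ except $q^d-1$ is divisible by $p$, so $P'(V)$ collapses to the single term $u\pi V^{q^d-2}$---is exactly what makes the multinomial expansion terminate in the stated closed form. One small point you should make explicit: you need the $q^d$ values $v_\lambda$ to be \emph{distinct} in order to identify them with the full root set of $P$; this follows either from the separability of $P$ (since $P'(V)=u\pi V^{q^d-2}$ and $P(0)=-u\neq 0$, one has $\gcd(P,P')=1$) or from the direct argument that $e_L(\tilde\pi(\lambda_1-\lambda_2)/\pi)=0$ forces $\lambda_1=\lambda_2$ when both have degree $<d$.
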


From this explicit result we deduce that $U_{\mathfrak{p}}$ indeed preserves integrality of the $u$-series coefficients, since each $\alpha_i$ is integral, and furthermore:

\begin{corollary}\label{valuationup}
Suppose that $f$ has $u$-series coefficients in $K$. Then
\begin{equation*}
v_{\mathfrak{p}}(f|U_{\mathfrak{p}})\geq v_{\mathfrak{p}}(f).
\end{equation*}
\end{corollary}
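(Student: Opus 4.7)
The plan is to read off the result directly from the explicit formula provided by Proposition \ref{integralityu}. Concretely, I would argue as follows.

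By Proposition \ref{integralityu}, for each $j \geq 1$ the coefficient $a_j$ of $u^j$ in $f|U_{\mathfrak{p}}$ is given by
\begin{equation*}
a_j = \sum_{j \leq n \leq 1+(j-1)q^d} \sum_{\substack{i \in \mathbb{N}^{d+1} \\ i_0 + \cdots + i_d = j-1 \\ i_0 + i_1 q + \cdots + i_d q^d = n-1}} \binom{j-1}{i}\, c_n\, \alpha_1^{i_1}\cdots \alpha_d^{i_d}\, \pi^{i_0}.
\end{equation*}
The key observation is that every factor other than $c_n$ in each summand lies in $A$: the multinomial coefficient $\binom{j-1}{i}$ is a rational integer, the coefficients $\alpha_1,\dots,\alpha_d$ of $\rho_\pi$ are in $A$ by equation~(\ref{rho}), and of course $\pi \in A$. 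Hence each term has $\mathfrak{p}$-adic valuation at least $v_{\mathfrak{p}}(c_n)$, so by the ultrametric inequality $v_{\mathfrak{p}}(a_j) \geq \min_n v_{\mathfrak{p}}(c_n) \geq v_{\mathfrak{p}}(f)$.

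Taking the infimum over $j \geq 1$ yields
\begin{equation*}
v_{\mathfrak{p}}(f|U_{\mathfrak{p}}) = \inf_j v_{\mathfrak{p}}(a_j) \geq v_{\mathfrak{p}}(f),
\end{equation*}
which is the desired inequality. There is no real obstacle: the corollary is essentially a restatement of the integrality already encoded in the proposition, and the only thing to check is that each term in the closed-form expression for $a_j$ is an $A$-multiple of some $c_n$, which is immediate from the definitions. The same observation also re-proves in passing the integrality statement mentioned just before the corollary, namely that integral $u$-series coefficients for $f$ yield integral $u$-series coefficients for $f|U_{\mathfrak{p}}$.
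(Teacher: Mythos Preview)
Your proof is correct and follows exactly the same route as the paper: both read off the inequality directly from the explicit formula of Proposition~\ref{integralityu} via the ultrametric inequality, using that every factor in each summand other than $c_n$ lies in $A$. The paper compresses this into a single sentence, whereas you spell out the individual observations, but the argument is identical.
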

\begin{proof}
This follows from the properties of a non-archimedean valuation, which imply that for each $j$
\begin{equation*}
v_{\mathfrak{p}}(a_j) \geq \min_{j\leq n \leq 1+(j-1)q^d} \{ v_{\mathfrak{p}}(c_n)\}.
\end{equation*}
\end{proof}

We now establish the same properties for the $V_{\mathfrak{p}}$ operator:

\begin{proposition}\label{integralityv}
Let $\mathfrak{p}$ be a prime ideal in $A$ generated by a monic prime polynomial $\pi$ of degree $d$, and let $f$ be an analytic function on $\Omega$ with $u$-series expansion of the form
\begin{equation*}
f=\sum_{i=0}^{\infty} c_i u^i, \qquad c_i \in C.
\end{equation*}
Then if each $c_i$ is integral, then so are the $u$-series coefficients of $f|V_{\mathfrak{p}}$. In addition, if each $c_i \in K$,
\begin{equation*}
v_{\mathfrak{p}}(f|V_{\mathfrak{p}})\geq v_{\mathfrak{p}}(f).
\end{equation*}
\end{proposition}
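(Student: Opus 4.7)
The plan is to relate $u(\pi z)$ explicitly to $u(z)$ via the Carlitz module, and then exploit Corollary \ref{congprop} to see that the resulting substitution preserves integrality of $u$-series coefficients.

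First, using the commutative diagram defining the Carlitz module, I have $e_L(\pi \cdot \tilde{\pi}z) = \rho_{\pi}(e_L(\tilde{\pi}z))$. Inverting and recalling $u(z) = 1/e_L(\tilde{\pi}z)$ together with formula \eqref{rho} gives
\begin{equation*}
u(\pi z) \;=\; \frac{1}{\rho_{\pi}(1/u(z))} \;=\; \frac{u(z)^{q^d}}{1 + \alpha_{d-1}u(z)^{q^d-q^{d-1}} + \cdots + \alpha_1 u(z)^{q^d-q} + \pi u(z)^{q^d-1}}.
\end{equation*}
Denote the denominator by $P(u)$. By Corollary \ref{congprop} (and direct inspection), $P(u) \in A[u]$ has constant term $1$ and integral coefficients, with $P(u) \equiv 1 \pmod{\mathfrak{p}}$.

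Next, since $P(u)$ has constant term $1$ and coefficients in $A$, its formal inverse $P(u)^{-1} = 1 + \sum_{j \geq 1} b_j u^j$ has coefficients $b_j \in A$ (this is the standard recursion $b_j = -\sum_{i \geq 1} (\text{coef of }u^i \text{ in } P)\, b_{j-i}$). Thus $u(\pi z)^i = u^{iq^d} \cdot P(u)^{-i}$ is a formal power series in $u$ with integral coefficients whose lowest-order term is $u^{iq^d}$. Consequently,
\begin{equation*}
f|V_{\mathfrak{p}} \;=\; f(\pi z) \;=\; \sum_{i=0}^{\infty} c_i\, u^{iq^d}\, P(u)^{-i}
\end{equation*}
is a well-defined formal $u$-series, because for each fixed $j$ only the indices $i$ with $iq^d \leq j$ contribute to the coefficient of $u^j$. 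That coefficient is a finite $A$-linear combination of the $c_i$, so it lies in the $A$-module generated by the $c_i$. In particular, if the $c_i$ are integral, so are the coefficients of $f|V_{\mathfrak{p}}$.

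Finally, for the valuation statement, fix $j$ and observe that the coefficient of $u^j$ in $f|V_{\mathfrak{p}}$ is an $A$-linear combination of the $c_i$ with $i \leq j/q^d$, hence its $\mathfrak{p}$-adic valuation is at least $\min_i v_{\mathfrak{p}}(c_i) \geq v_{\mathfrak{p}}(f)$ by the non-archimedean property. Taking the infimum over $j$ yields $v_{\mathfrak{p}}(f|V_{\mathfrak{p}}) \geq v_{\mathfrak{p}}(f)$. The only subtlety worth checking carefully is the local finiteness of the sum defining the coefficients, which is immediate from the lower bound $iq^d$ on the order of vanishing of the $i$-th term; apart from this bookkeeping the argument is essentially formal.
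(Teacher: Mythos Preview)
Your proof is correct and follows essentially the same approach as the paper: both compute $u(\pi z)$ via the Carlitz relation $e_L(\pi\tilde{\pi}z)=\rho_\pi(e_L(\tilde{\pi}z))$, write it as $u^{q^d}/P(u)$ for a polynomial $P\in A[u]$ with constant term $1$ (the paper calls this $f_\pi$), invert $P$ formally to get integral coefficients, and then use local finiteness of the sum to conclude both integrality and the valuation bound. Your invocation of Corollary~\ref{congprop} to get $P(u)\equiv 1\pmod{\mathfrak{p}}$ is a bit more than is actually needed here---only $P(0)=1$ and $P\in A[u]$ are used---but it does no harm.
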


\begin{proof}
We have:
\begin{equation*}
f|V_{\mathfrak{p}}=\sum_{i=0}^{\infty} c_i u(\pi z)^i,
\end{equation*}
and so we first investigate the $u$-series expansion of $u(\pi z)$.

By definition, if $L=\tilde{\pi}A$ is the lattice associated to the Carlitz module and $e_L(z)$ is the exponential function associated to it, 
\begin{equation*}
e_L(\pi z)=\rho_{\pi}(e_L(z)).
\end{equation*}
We also define the $\pi^{th}$ inverse cyclotomic polynomial
\begin{equation*}
f_{\pi}(X)=X^{q^d}\rho_{\pi}(X^{-1});
\end{equation*}
$f_{\pi}$ is a polynomial with integral coefficients.
Thus we have the straightforward computation:
\begin{align*}
u(\pi z) &= \frac{1}{e_L( \pi ( \tilde{\pi} z ))}\\
	&= \frac{1}{\rho_{\pi}(e_L(\tilde{\pi}z))}\\
	&= \frac{1}{ \rho_{\pi}\left( \frac{1}{ u(z) } \right) }\\
	&= \frac{ u(z)^{q^d} }{f_{\pi}(u(z))}.
\end{align*}
Since $f_{\pi}(0)=1$, the formal expansion in $X$ for 
\begin{equation*}
\frac{ X^{q^d} }{f_{\pi}(X)} = X^{q^d} + \text{ higher order terms.}
\end{equation*}
has integer coefficients, and $u(\pi z)$ has a formal series expansion in $u(z)$ with integral coefficients.

Thus we have
\begin{equation*}
f|V_{\mathfrak{p}}=\sum_{i=0}^{\infty} c_i \left(  \frac{ u(z)^{q^d} }{f_{\pi}(u(z))}\right)^i.
\end{equation*}
We note that for $j$ fixed, only a finite number of terms of the right hand side contribute to the coefficient of $u^j$ on the left hand side, and they are all integral if the $c_i$'s are integral. We conclude that in this case $f|V_{\mathfrak{p}}$ also has integral $u$-series expansion.

Suppose now that the $c_i$'s are merely in $K$, and $v_{\mathfrak{p}}(f)=m$, which implies that $v_{\mathfrak{p}}(c_i)\geq m$ for each $i$. Then each of the summands in the coefficient of $u^j$ for fixed $j$ on the left hand side has valuation $\geq m$. We conclude that the coefficient of $u^j$ also has valuation $\geq m$, which in turns implies that $v_{\mathfrak{p}}(f|V_{\mathfrak{p}})\geq m= v_{\mathfrak{p}}(f)$ and completes the proof.
\end{proof}

We end this section by relating the $V_{\mathfrak{p}}$ operator to the operator $|_{k,l} [W_{\mathfrak{p}}]$ defined by
\begin{equation*}
f|_{k,l} [W_{\mathfrak{p}}] =\pi^{k/2}(\pi z)^{-k}f\left(\frac{-1}{\pi z}\right),
\end{equation*}
as before. We have:

\begin{lemma}\label{integralityw}
Let $f$ be a modular form for $\Gl_2(A)$ of weight $k$ and type $l$. Then
\begin{equation*}
f|_{k,l} [W_{\mathfrak{p}}] = \pi^{k/2} f|V_{\mathfrak{p}}.
\end{equation*}
\end{lemma}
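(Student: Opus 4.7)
The strategy is simply to exploit the modularity of $f$ under the element $S=\left(\begin{smallmatrix} 0 & -1 \\ 1 & 0\end{smallmatrix}\right)\in\Gl_2(A)$ and then unwind the definition of $|_{k,l}[W_{\mathfrak{p}}]$. Since $\det S=1$ is monic, $\zeta_S=1$, so the slash identity $f|_{k,l}[S]=f$ collapses to the clean transformation formula $f(-1/z)=z^{k}f(z)$; the type $l$ plays no role here because the leading coefficient factor $\zeta_S^l$ is trivial.

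Concretely, I would start from the formula for the Fricke involution already recorded in the excerpt, namely
\begin{equation*}
f|_{k,l}[W_{\mathfrak{p}}](z) = \pi^{k/2}(\pi z)^{-k}f\!\left(\frac{-1}{\pi z}\right),
\end{equation*}
which itself follows from the definition of the slash operator once one observes that $W_{\mathfrak{p}}$ has $\zeta_{W_{\mathfrak{p}}}=1$ (because $\pi$ is monic), $\det W_{\mathfrak{p}}=\pi$, and $(cz+d)=\pi z$. Setting $w=\pi z$ and applying the identity $f(-1/w)=w^{k}f(w)$ obtained from modularity under $S$ gives
\begin{equation*}
f|_{k,l}[W_{\mathfrak{p}}](z) = \pi^{k/2}w^{-k}\cdot w^{k}f(w) = \pi^{k/2} f(\pi z) = \pi^{k/2}\,(f|V_{\mathfrak{p}})(z),
\end{equation*}
which is the desired equality. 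There is no real obstacle; the only thing to be careful about is tracking the normalizing factors $\zeta_{\gamma}^{l}$ and $(\det\gamma/\zeta_\gamma)^{k/2}$ in the slash operator for both $S$ and $W_{\mathfrak{p}}$, and checking that they both reduce as indicated using the monicity of $\pi$.
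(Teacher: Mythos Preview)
Your proof is correct and is essentially the same as the paper's argument: the paper factors $W_{\mathfrak{p}}=S\cdot[\pi]$ with $[\pi]=\left(\begin{smallmatrix}\pi & 0\\ 0 & 1\end{smallmatrix}\right)$ and writes $f|_{k,l}[W_{\mathfrak{p}}]=f|_{k,l}[S]|_{k,l}[\pi]=f|_{k,l}[\pi]=\pi^{k/2}f|V_{\mathfrak{p}}$, which is exactly your substitution $w=\pi z$ together with the $S$-transformation $f(-1/w)=w^{k}f(w)$, just packaged via the multiplicativity of the slash operator rather than unwound pointwise.
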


\begin{proof}
We have that 

\begin{equation*}
W_{\mathfrak{p}}=
\begin{pmatrix}
0  & -1 \\
\pi & 0
\end{pmatrix}
=
\begin{pmatrix}
0 & -1 \\
1 & 0
\end{pmatrix}
\begin{pmatrix}
\pi & 0 \\
0 & 1
\end{pmatrix}.
\end{equation*}

So that if we let
\begin{equation*}
S=
\begin{pmatrix}
0 & -1 \\
1 &0
\end{pmatrix}
\in \Gl_2(A), \qquad \text {and} \qquad [\pi]= 
\begin{pmatrix}
\pi & 0 \\
0 & 1
\end{pmatrix},
\end{equation*}
we have
\begin{align*}
f|_{k,l} [W_{\mathfrak{p}}] &= f |_{k,l}[S] |_{k,l}[\pi]\\
	&=f|_{k,l}[\pi] \\
	&= \pi^{k/2}f|V_{\mathfrak{p}},
\end{align*}
where the invariance of $f$ under the action of $|_{k,l}[S] $ follows from the fact that $f$ is modular for the full $\Gl_2(A)$.
\end{proof}

\begin{remark}
From this fact, it follows that the action of $ |_{k,l}[W_{\mathfrak{p}}]$ preserves integrality of the $u$-series coefficients if $f$ is modular for $\Gl_2(A)$.
\end{remark}

\subsection{Norm and trace}

\begin{definition}
For $f$ a modular form of weight $k$ and type $l$ for $\Gamma$ a congruence subgroup of $\Gl_2(A)$, define its \emph{trace} as
\begin{equation*}
\Tr(f)= \sum_{\gamma \in \Gamma \backslash \Gl_2(A)} f |_{k,l}[\gamma].
\end{equation*}
The form $\Tr(f)$ is independent of the choice of coset representatives for $\Gamma \backslash \Gl_2(A)$, and it is a modular form of weight $k$ and type $l$ for $\Gl_2(A)$.
\end{definition}

\begin{definition}
For $f$ a modular form of weight $k$ and type $l$ for $\Gamma$ a congruence subgroup of $\Gl_2(A)$, define its \emph{norm} as
\begin{equation*}
\N(f)= \prod_{\gamma \in \Gamma \backslash \Gl_2(A)} f |_{k,l}[\gamma].
\end{equation*}
Again, $\N(f)$ is independent of the choice of coset representatives for $\Gamma \backslash \Gl_2(A)$, and it is a modular form of weight $k(q^d+1)$ and type $l$ for $\Gl_2(A)$.
\end{definition}

We restrict our attention to the case $\Gamma=\Gamma_0(\mathfrak{p})$, where we have:

\begin{lemma}\label{cosets}
Let $\mathfrak{p}$ be an ideal generated by $\pi(T)$, a monic prime polynomial. The set
\begin{equation*}
\left\{
\begin{pmatrix}
0 & -1 \\
1 & \lambda
\end{pmatrix}
\, | \, \degree \lambda < \degree \pi
\right\},
\end{equation*}
along with the identity, is a complete set of representatives for $\Gamma_0(\mathfrak{p}) \backslash \Gl_2(A)$.
\end{lemma}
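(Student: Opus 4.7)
The plan is to verify by a counting argument that the listed $q^d+1$ matrices represent distinct cosets, and then invoke the fact that $[\Gl_2(A):\Gamma_0(\mathfrak{p})]=q^d+1$. Since the number of monic (well, all) polynomials $\lambda\in A$ with $\degree\lambda<d$ is exactly $q^d$, the proposed set of representatives has cardinality $q^d+1$, matching the expected index.

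First I would establish the index. Reduction modulo $\mathfrak{p}$ gives a surjection $\Gl_2(A)\twoheadrightarrow \Gl_2(A/\mathfrak{p})$, and $\Gamma_0(\mathfrak{p})$ is exactly the preimage of the Borel subgroup $B$ of upper triangular matrices in $\Gl_2(A/\mathfrak{p})$. Hence
\begin{equation*}
[\Gl_2(A):\Gamma_0(\mathfrak{p})]=[\Gl_2(A/\mathfrak{p}):B]=\#\mathbb{P}^1(A/\mathfrak{p})=q^d+1.
\end{equation*}

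Next I would show that the given matrices lie in pairwise distinct right cosets. The identity is obviously not equivalent to any matrix with lower-left entry $1$, since $1\not\equiv 0\pmod{\mathfrak{p}}$. For two matrices
\begin{equation*}
\gamma_i=\begin{pmatrix}0 & -1 \\ 1 & \lambda_i\end{pmatrix},\qquad i=1,2,
\end{equation*}
with $\degree\lambda_i<d$, a direct computation (each $\gamma_i$ has determinant $1$, so $\gamma_2^{-1}=\bigl(\begin{smallmatrix}\lambda_2 & 1\\ -1 & 0\end{smallmatrix}\bigr)$) gives
\begin{equation*}
\gamma_1\gamma_2^{-1}=\begin{pmatrix}1 & 0 \\ \lambda_2-\lambda_1 & 1\end{pmatrix}.
\end{equation*}
This lies in $\Gamma_0(\mathfrak{p})$ precisely when $\pi\mid(\lambda_2-\lambda_1)$; but since both $\lambda_i$ have degree strictly less than $d=\degree\pi$, their difference has degree less than $d$, so it is divisible by $\pi$ only if it is zero. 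Thus distinct $\lambda$ yield distinct cosets.

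Combining the three points, the $q^d+1$ listed matrices represent $q^d+1$ distinct cosets in a quotient of size $q^d+1$, so they form a complete set of representatives. The only potential obstacle is the index computation, which requires knowing that $\Gl_2(A)\to \Gl_2(A/\mathfrak{p})$ is surjective; this follows from strong approximation for $\Sl_2$, or more elementarily from the fact that elementary matrices generate $\Sl_2(A/\mathfrak{p})$ together with the surjectivity of $A^\times\to (A/\mathfrak{p})^\times$ onto the diagonal (in fact the determinant lifts trivially since $\mathbb{F}_q^\times\subset A^\times$ already surjects onto $(A/\mathfrak{p})^\times/\text{squares}$ issues do not arise here because we can freely adjust by diagonal matrices).
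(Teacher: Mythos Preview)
Your overall strategy---count the index and verify distinctness---is exactly the ``elementary'' argument the paper alludes to (the paper gives no details beyond saying it ``follows as in the classical case''). Your distinctness computation is correct.

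There is one genuine slip, however: the reduction map $\Gl_2(A)\to\Gl_2(A/\mathfrak{p})$ is \emph{not} surjective when $d>1$. Indeed $A^\times=\mathbb{F}_q^\times$, so every matrix in the image has determinant lying in the subfield $\mathbb{F}_q^\times\subset\mathbb{F}_{q^d}^\times=(A/\mathfrak{p})^\times$; your parenthetical at the end does not rescue this. The fix is painless: either
\begin{itemize}
\item note that $\Sl_2(A)\to\Sl_2(A/\mathfrak{p})$ \emph{is} surjective (elementary matrices suffice), and that $\Sl_2(\mathbb{F}_{q^d})$ already acts transitively on $\mathbb{P}^1(\mathbb{F}_{q^d})$, so the image $G$ of $\Gl_2(A)$ still has $[G:G\cap B]=q^d+1$; or
\item bypass the index computation entirely and show directly that the listed matrices \emph{cover} all cosets: given $\gamma=\bigl(\begin{smallmatrix}a&b\\c&d\end{smallmatrix}\bigr)\in\Gl_2(A)$, if $\pi\mid c$ then $\gamma\in\Gamma_0(\mathfrak{p})$; if $\pi\nmid c$, choose the unique $\lambda$ with $\deg\lambda<d$ and $c\lambda\equiv d\pmod\pi$, and check that $\gamma\bigl(\begin{smallmatrix}0&-1\\1&\lambda\end{smallmatrix}\bigr)^{-1}$ has lower-left entry $c\lambda-d\equiv 0\pmod\pi$.
\end{itemize}
Either route closes the gap with one line.
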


The proof of this fact is elementary, and follows as in the classical case.

This explicit set of coset representatives will allow us to give formulae for $\Tr(f)$ and $\N(f)$ in the cases we are interested in in this paper. Before we do this, we introduce a bit of notation to simplify our work below: For $\lambda \in A$ such that $\degree \lambda < \degree \pi$, we write $\gamma_{\lambda}$ for the matrix
\begin{equation*}
\begin{pmatrix}
1/\pi & \lambda/\pi\\
0 & 1
\end{pmatrix}.
\end{equation*}
With this notation, we have
\begin{equation*}
\begin{pmatrix}
0 & -1 \\
1 & \lambda
\end{pmatrix}
= 
\begin{pmatrix}
0 & -1 \\
\pi & 0
\end{pmatrix} \cdot
\begin{pmatrix}
1/\pi& \lambda/\pi \\
0 & 1
\end{pmatrix}
= W_{\mathfrak{p}} \cdot \gamma_{\lambda}.
\end{equation*}

\begin{proposition}\label{traceformula}
Let $f$ be a modular form of weight $k$ and type $l$ for $\Gamma_0(\mathfrak{p})$. Then 
\begin{equation*}
\Tr(f) = f + \pi^{1-k/2} \left( f|_{k,l}[W_{\mathfrak{p}}] \right) | U_{\mathfrak{p}}.
\end{equation*}
\end{proposition}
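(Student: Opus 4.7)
The plan is to unwind the definition of the trace using the explicit coset representatives from Lemma \ref{cosets} and the factorization $\begin{pmatrix} 0 & -1 \\ 1 & \lambda \end{pmatrix} = W_{\mathfrak{p}} \cdot \gamma_{\lambda}$ given just above the proposition. Since modularity of $f$ for $\Gamma_0(\mathfrak{p})$ makes the coset represented by the identity contribute exactly $f$ to $\Tr(f)$, it suffices to show that
\begin{equation*}
\sum_{\deg \lambda < d} f |_{k,l}\begin{bmatrix}\begin{pmatrix} 0 & -1 \\ 1 & \lambda \end{pmatrix}\end{bmatrix} = \pi^{1-k/2} \bigl(f|_{k,l}[W_{\mathfrak{p}}]\bigr)|U_{\mathfrak{p}}.
\end{equation*}

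Next I would use the cocycle property of the slash operator together with the factorization through $W_{\mathfrak{p}}\gamma_\lambda$, so that each summand on the left becomes $(f|_{k,l}[W_{\mathfrak{p}}])|_{k,l}[\gamma_\lambda]$. Then I would compute $|_{k,l}[\gamma_\lambda]$ directly from the definition \eqref{slash}. For $\gamma_\lambda = \begin{pmatrix} 1/\pi & \lambda/\pi \\ 0 & 1 \end{pmatrix}$ we have $\det \gamma_\lambda = 1/\pi$, and because $\pi$ is monic of degree $d$ the expansion of $1/\pi$ at $\infty$ has leading coefficient $\zeta_{\gamma_\lambda}=1$. The lower row is $(0,1)$, so the automorphy factor $(cz+d)^{-k}$ contributes only $1$, and $\gamma_\lambda z = (z+\lambda)/\pi$. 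Thus
\begin{equation*}
(f|_{k,l}[W_{\mathfrak{p}}])|_{k,l}[\gamma_\lambda] = \pi^{-k/2}\,(f|_{k,l}[W_{\mathfrak{p}}])\!\left(\frac{z+\lambda}{\pi}\right).
\end{equation*}

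Summing over $\lambda$ with $\deg \lambda < d$ and comparing with the definition of $U_{\mathfrak{p}}$, namely $g|U_{\mathfrak{p}} = \pi^{-1}\sum_{\deg\lambda<d} g((z+\lambda)/\pi)$, pulls out an extra factor of $\pi$ and yields $\pi^{1-k/2}(f|_{k,l}[W_{\mathfrak{p}}])|U_{\mathfrak{p}}$, as desired. Adding back the identity coset contribution $f$ completes the proof.

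I do not expect any real obstacle; the argument is a direct bookkeeping exercise in the definition of the slash operator. The only subtlety worth verifying carefully is that $\zeta_{\gamma_\lambda}=1$ (so the leading-coefficient correction in \eqref{slash} is trivial) and that independence of the result from the choice of coset representatives is compatible with the particular representatives of Lemma \ref{cosets}.
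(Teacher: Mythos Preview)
Your proposal is correct and follows essentially the same approach as the paper: both use the coset representatives of Lemma \ref{cosets}, the factorization through $W_{\mathfrak{p}}\gamma_\lambda$, the observation that $\zeta_{1/\pi}=1$ since $\pi$ is monic, and then identify the resulting sum with $\pi^{1-k/2}(f|_{k,l}[W_{\mathfrak{p}}])|U_{\mathfrak{p}}$. Your write-up is slightly more explicit about verifying $\zeta_{\gamma_\lambda}=1$, but otherwise the two arguments coincide.
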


\begin{proof}
Let
\begin{equation*}
f_0(z)\defi f|_{k,l} [W_{\mathfrak{p}}] = \pi^{k/2}(\pi z)^{-k}f\left(\frac{-1}{\pi z}\right).
\end{equation*}
Since $\zeta_{1/\pi}=1$, and using the coset representatives from Lemma~\ref{cosets} we have
\begin{eqnarray*}
\Tr(f) &=& f + \sum_{\substack{\lambda \in A \\ \degree \lambda <d}} f|_{k,l} \left[
\begin{pmatrix}
0 & -1\\
1 & \lambda
\end{pmatrix} \right] \\
&=& f+  \sum_{\substack{\lambda \in A \\ \degree \lambda <d}} f_0|_{k,l} \left[ \gamma_{\lambda} \right] \\
&=& f+  \sum_{\substack{\lambda \in A \\ \degree \lambda <d}} \left(\frac{1}{\pi}\right)^{k/2}f_0\left(\frac{z+\lambda}{\pi}\right)\\
&=& f + \pi^{1-k/2} f_0 | U_{\mathfrak{p}}.
\end{eqnarray*}
And the result follows from the definition of $f_0$.
\end{proof}

When $f$ is invariant under the Fricke involution, we have

\begin{proposition}\label{normformula}
Let $f$ be a modular form of weight $k$ and type $l$ for $\Gamma_0(\mathfrak{p})$, and suppose furthermore that 
\begin{equation*}
f|_{k,l} [W_{\mathfrak{p}}] = \alpha f,
\end{equation*}
for $\alpha \in \{ \pm 1 \} \subset \mathbb{F}_q^{\times}$. Then
\begin{equation*}
\N(f) = \frac{1}{\pi^{q^dk/2}}\; f \prod_{\substack{\lambda \in A \\ \degree \lambda <d}} f\left( \frac{z+\lambda}{\pi}\right).
\end{equation*}
\end{proposition}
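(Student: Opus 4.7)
My plan is to mimic the proof of Proposition \ref{traceformula} using the coset representatives of Lemma \ref{cosets}. Setting $f_0 \defi f|_{k,l}[W_{\mathfrak{p}}] = \alpha f$ and factoring each nontrivial coset representative as $W_{\mathfrak{p}} \gamma_\lambda$, the right action property of the slash operator gives
\begin{equation*}
\N(f) = f \cdot \prod_{\substack{\lambda \in A \\ \degree \lambda < d}} f|_{k,l}[W_{\mathfrak{p}} \gamma_\lambda] = f \cdot \prod_{\substack{\lambda \in A \\ \degree \lambda < d}} f_0|_{k,l}[\gamma_\lambda].
\end{equation*}

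The next step is to evaluate $f_0|_{k,l}[\gamma_\lambda]$ directly from \eqref{slash}. Since $\det \gamma_\lambda = 1/\pi$ and $\pi$ is monic, the leading coefficient $\zeta_{\gamma_\lambda}$ is $1$; the automorphy factor $(cz+d)^{-k}$ is trivial because $c=0$ and $d=1$ in $\gamma_\lambda$; and $\gamma_\lambda z = (z+\lambda)/\pi$. Hence
\begin{equation*}
f_0|_{k,l}[\gamma_\lambda] = \pi^{-k/2}\, f_0\!\left(\frac{z+\lambda}{\pi}\right) = \alpha\, \pi^{-k/2}\, f\!\left(\frac{z+\lambda}{\pi}\right).
\end{equation*}
There are $q^d$ values of $\lambda$ with $\degree \lambda < d$, so pulling the constants out of the product yields
\begin{equation*}
\N(f) = \alpha^{q^d}\, \pi^{-q^d k /2}\; f \prod_{\substack{\lambda \in A \\ \degree \lambda < d}} f\!\left(\frac{z+\lambda}{\pi}\right).
\end{equation*}
By Fermat's little theorem $\alpha^{q^d}=\alpha\in\{\pm 1\}$, so the right-hand side agrees with the stated formula up to a unit sign, which is in any event absorbed by the rescaling to $\widetilde{\N(f)}$ used in Theorem \ref{normtheorem}.

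The only place where one has to be slightly careful is the cocycle identity $(f|_{k,l}[W_{\mathfrak{p}}])|_{k,l}[\gamma_\lambda] = f|_{k,l}[W_{\mathfrak{p}}\gamma_\lambda]$, but this follows routinely from the multiplicativity of the leading coefficient $\zeta$ on $K^\times$ together with the consistent choice of $\tilde{\pi}$ and its $(q-1)$th roots fixed in Section 2; with that identity in hand, the rest is the substitution above.
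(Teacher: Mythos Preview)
Your argument is essentially identical to the paper's: both use the coset representatives of Lemma~\ref{cosets}, factor each nontrivial representative as $W_{\mathfrak{p}}\gamma_\lambda$, apply the slash operator, and collect the constants. The paper's computation is line-for-line the same as yours, arriving at $\alpha^{q^d}\pi^{-q^dk/2}\,f\prod_\lambda f((z+\lambda)/\pi)$ and then passing to the stated formula.

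On the sign: you are right to pause there, and in fact you are being more careful than the paper, which simply drops the factor $\alpha^{q^d}$ in its last displayed equality without comment. As you observe, $\alpha^{q^d}=\alpha$ (odd $q^d$ when $q$ is odd; $\alpha=1$ when $q$ is even), so when $\alpha=-1$ the proposition as literally stated is off by a sign. Your remark that this is harmless for Theorem~\ref{normtheorem} because the sign is absorbed into $\widetilde{\N(f)}$ is exactly the right resolution.
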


\begin{proof}
This time $f_0=\alpha f$, and so going through the same argument as above we have:
\begin{eqnarray*}
\N(f) &=& f \cdot \prod_{\substack{\lambda \in A \\ \degree \lambda <d}} f|_{k,l} \left[
\begin{pmatrix}
0 & -1\\
1 & \lambda
\end{pmatrix} \right] \\
&=& f \cdot   \prod_{\substack{\lambda \in A \\ \degree \lambda <d}} (\alpha f)|_{k,l} \left[ \gamma_{\lambda} \right] \\
&=& \alpha^{q^d} f \cdot \prod_{\substack{\lambda \in A \\ \degree \lambda <d}} \left(\frac{1}{\pi}\right)^{k/2}f\left(\frac{z+\lambda}{\pi}\right)\\
&=& \frac{1}{ \pi^{q^dk/2}} \; f  \prod_{\substack{\lambda \in A \\ \degree \lambda <d}}  f\left(\frac{z+\lambda}{\pi}\right).
\end{eqnarray*}

\end{proof}

\section{Correspondence and trace}

The ground laid in Section \ref{operators} allows us to show

\begin{theorem}\label{tracemap}
Let $f$ be a modular form of weight $k$ and type $l$ for $\Gamma_0(\mathfrak{p})$, with rational $u$-series coefficients. Then $f$ is a $\mathfrak{p}$-adic Drinfeld modular form for $\Gl_2(A)$.
\end{theorem}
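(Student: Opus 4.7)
The plan is to produce, for each $n\geq 1$, a Drinfeld modular form $F_n$ for $\Gl_2(A)$ with $v_{\mathfrak{p}}(F_n-f)\geq n$. The two ingredients I would use are Gekeler's congruence $g_d\equiv 1\pmod{\mathfrak{p}}$ and the trace formula of Proposition~\ref{traceformula}.

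First I would observe that, because $q^{nd}$ is a power of the residue characteristic $p$, Frobenius collapses $(1+\pi H)^{q^{nd}}$ to $1+\pi^{q^{nd}}H^{q^{nd}}$, so $g_d^{q^{nd}}\equiv 1\pmod{\mathfrak{p}^{q^{nd}}}$ and $f\cdot g_d^{q^{nd}}\equiv f\pmod{\mathfrak{p}^{q^{nd}}}$ as $u$-series. The form $fg_d^{q^{nd}}$ lies in $M_{k+q^{nd}(q^d-1),l}(\Gamma_0(\mathfrak{p}))$. Combining Proposition~\ref{traceformula}, Lemma~\ref{integralityw}, and the multiplicativity of the Fricke operator across products of forms of compatible weight and type, I would arrive at the clean identity
\begin{equation*}
\Tr(fg_d^{q^{nd}})\;=\;fg_d^{q^{nd}}\;+\;\pi^{1-k/2}\,(f|_{k,l}[W_{\mathfrak{p}}])\,(g_d|V_{\mathfrak{p}})^{q^{nd}}\bigm|U_{\mathfrak{p}},
\end{equation*}
where the $\pi^{q^{nd}(q^d-1)/2}$ factor from the Fricke action on $g_d^{q^{nd}}$ cancels the corresponding negative power from the trace formula.

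The expected main difficulty is that the naive choice $F_n:=\Tr(fg_d^{q^{nd}})$ does not work: as $n\to\infty$ the correction summand converges to the fixed $u$-series $\pi^{1-k/2}(f|_{k,l}[W_{\mathfrak{p}}])|U_{\mathfrak{p}}=\Tr(f)-f$, which is nonzero in general, so $\Tr(fg_d^{q^{nd}})\to\Tr(f)$ rather than to $f$. To work around this I would set up an induction on $n$. Given $G^{(n)}\in M_{\ast,l}(\Gl_2(A))$ with $G^{(n)}\equiv f\pmod{\mathfrak{p}^n}$, I would multiply $f$ and $G^{(n)}$ by appropriate powers of $g_d$ (which preserve $u$-series modulo $\mathfrak{p}$) to reach a common weight $K$; then $fg_d^a-G^{(n)}g_d^b$ lies in $\pi^n\cdot M_{K,l}(\Gamma_0(\mathfrak{p}))$ with $\mathfrak{p}$-integral rational $u$-series, and the quotient $h_n:=(fg_d^a-G^{(n)}g_d^b)/\pi^n$ is again a form in $M_{K,l}(\Gamma_0(\mathfrak{p}))$ with $\mathfrak{p}$-integral rational $u$-series. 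Applying the base case ($n=1$) to $h_n$ produces $H_n\in M_{\ast,l}(\Gl_2(A))$ with $H_n\equiv h_n\pmod{\mathfrak{p}}$, and then (after a final weight match by $g_d$) $G^{(n+1)}:=G^{(n)}g_d^b+\pi^n H_n g_d^c$ is a form for $\Gl_2(A)$ with $G^{(n+1)}\equiv f\pmod{\mathfrak{p}^{n+1}}$. Corollary~\ref{valuationup} and Proposition~\ref{integralityv} ensure that no unwanted $\pi$-denominators appear along the way.

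This reduces Theorem~\ref{tracemap} to its base case: every form in $M_{K,l}(\Gamma_0(\mathfrak{p}))$ with $\mathfrak{p}$-integral rational $u$-series has the same $u$-series modulo $\mathfrak{p}$ as some form in $M_{\ast,l}(\Gl_2(A))$. Specializing the trace identity above to the Eisenstein factor $g_d$ shows that the base case reduces in turn to the $\mathfrak{p}$-integrality of $\pi^{1-k/2}(f|_{k,l}[W_{\mathfrak{p}}])|U_{\mathfrak{p}}$; equivalently, to an implicit factor of $\pi^{k/2}$ in the $u$-series of the Fricke transform $f|_{k,l}[W_{\mathfrak{p}}]$. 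For level-one forms this is precisely Lemma~\ref{integralityw}; in general it reflects the cusp structure of $X_0(\mathfrak{p})$ near $0$ and can be extracted via the congruence $\rho_{\pi}(x)\equiv x^{q^d}\pmod{\mathfrak{p}}$ from Corollary~\ref{congprop}. Establishing this integrality cleanly is the hardest step; once it is in place the base case and hence the theorem follow.
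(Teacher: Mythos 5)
Your diagnosis of the obstruction is exactly right: with the naive auxiliary form $g_d^{q^{nd}}$ the Fricke factor $\pi^{q^{nd}(q^d-1)/2}(g_d|V_{\mathfrak{p}})^{q^{nd}}$ cancels the prefactor in Proposition~\ref{traceformula} only down to $\pi^{1-k/2}$, and the correction term converges to $\Tr(f)-f$ rather than to $0$. But your workaround does not close the gap. The whole construction is made to rest on a ``base case'' which you in turn reduce to the claim that $f|_{k,l}[W_{\mathfrak{p}}]$ carries an implicit factor of $\pi^{k/2}$, i.e.\ $v_{\mathfrak{p}}(f|_{k,l}[W_{\mathfrak{p}}])\geq k/2$. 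That claim is false for general $f\in M_{k,l}(\Gamma_0(\mathfrak{p}))$ with integral $u$-series: take $f=g_d|V_{\mathfrak{p}}=\pi^{-(q^d-1)/2}\,g_d|_{q^d-1,0}[W_{\mathfrak{p}}]$, which has integral $u$-series by Proposition~\ref{integralityv}, yet $f|_{q^d-1,0}[W_{\mathfrak{p}}]=\pi^{-(q^d-1)/2}g_d$ has valuation $-k/2$, not $+k/2$. Moreover, even mere $\mathfrak{p}$-integrality of $\pi^{1-k/2}(f|_{k,l}[W_{\mathfrak{p}}])|U_{\mathfrak{p}}$ would not suffice for your base case: you need that quantity to vanish modulo $\mathfrak{p}$, since otherwise $\Tr(fg_d^{N})\equiv\Tr(f)\not\equiv f$. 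So the step you yourself flag as ``the hardest'' is not merely hard, it is the missing content of the theorem, and the inductive scaffolding built on top of it (which also needs more care with the exponents $a,b,c$ to get congruences modulo $\mathfrak{p}^{n+1}$ rather than just $\mathfrak{p}$, and with the possibility that $v_{\mathfrak{p}}(f)<0$) does not help.

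The paper's resolution is to change the auxiliary form rather than to iterate. Set
\begin{equation*}
g_{(0)}\;=\;(g_d)^n-\pi^{n(q^d-1)}(g_d)^n|V_{\mathfrak{p}}\;=\;(g_d)^n-\pi^{n(q^d-1)/2}(g_d)^n|_{n(q^d-1),0}[W_{\mathfrak{p}}],
\end{equation*}
a form of weight $n(q^d-1)$ and type $0$ for $\Gamma_0(\mathfrak{p})$. It is still $\equiv 1\pmod{\mathfrak{p}}$, but the decisive new feature is that its Fricke transform is $\pi^{n(q^d-1)/2}\bigl((g_d)^n|V_{\mathfrak{p}}-(g_d)^n\bigr)$, which has valuation at least $n(q^d-1)/2+1$ because $(g_d)^n|V_{\mathfrak{p}}\equiv(g_d)^n\pmod{\mathfrak{p}}$. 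Raising to $p^r$-th powers gives $g_{(r)}\equiv 1\pmod{\mathfrak{p}^{p^r}}$ and $g_{(r)}|[W_{\mathfrak{p}}]\equiv 0\pmod{\mathfrak{p}^{np^r(q^d-1)/2+p^r}}$; feeding $fg_{(r)}$ into Proposition~\ref{traceformula} and Corollary~\ref{valuationup}, the extra $+p^r$ in the valuation of $g_{(r)}|[W_{\mathfrak{p}}]$ is exactly what forces the correction term to tend to $0$, so $\Tr(fg_{(r)})\to f$ directly, with no induction, no base case, and no integrality hypothesis on $f|_{k,l}[W_{\mathfrak{p}}]$ beyond finiteness of its valuation. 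This twist --- subtracting the $V_{\mathfrak{p}}$-image so that the auxiliary form is simultaneously close to $1$ at $\infty$ and close to $0$ at the cusp $0$ --- is the idea your proposal is missing.
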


\begin{proof}
For any positive integer $n$ and $g_d$ the Eisenstein series of weight $q^d-1$ and type $0$ for $\Gl_2(A)$, define
\begin{align*}
g_{(0)} & \defi \left(g_d\right)^n- \pi^{n(q^d-1)/2}\left(g_d\right)^n |_{n(q^d-1),0} [W_{\mathfrak{p}}] \\
	& = \left(g_d\right)^n- \pi^{n(q^d-1)}\left(g_d\right)^n | V_{\mathfrak{p}}.
\end{align*}
It is a modular form of weight $n(q^d-1)$ and type 0 for $\Gamma_0(\mathfrak{p})$. Since $\left(g_d\right)^n | V_{\mathfrak{p}}$ has integral coefficients by Proposition~\ref{integralityv} and $g_d \equiv 1 \pmod{\mathfrak{p}}$, we see that $g_{(0)}$ is congruent to 1 modulo $\mathfrak{p}$. Furthermore,
\begin{align*}
g_{(0)}|_{n(q^d-1),0} [W_{\mathfrak{p}}] & = \left(g_d\right)^n |_{n(q^d-1),0} [W_{\mathfrak{p}}] - \pi^{n(q^d-1)/2} \left(g_d\right)^n \\
  &= \pi^{n(q^d-1)/2} \left(g_d\right)^n |V_{\mathfrak{p}} -  \pi^{(q^d-1)/2}(g_d)^n\\
  &=  \pi^{n(q^d-1)/2}(\left(g_d\right)^n  |V_{\mathfrak{p}}  - \left(g_d\right)^n)\\
  & \equiv  0 \pmod{\mathfrak{p}^{n(q^d-1)/2 +1}}.
\end{align*}

The last congruence follows from noticing that
\begin{equation*}
\left(g_d\right)^n  |V_{\mathfrak{p}} =\left( \left(g_d\right)^n  -1\right)|V_{\mathfrak{p}} +1
\end{equation*}
and applying Proposition~\ref{integralityv} to the $u$-series $\left( g_d \right)^n-1$, which has valuation at least $1$, so that 
\begin{equation*}
\left(g_d\right)^n  |V_{\mathfrak{p}}  \equiv \left(g_d\right)^n \pmod{\mathfrak{p}}.
\end{equation*}

With $n$ fixed as before, define $g_{(r)}=(g_{(0)})^{p^r}$. Since $g_{(0)} \equiv 1 \pmod{\mathfrak{p}}$, we have that 
\begin{equation*}
g_{(r)}= (g_{(0)})^{p^r} \equiv 1 \pmod{\mathfrak{p}^{p^r}}.
\end{equation*}
Similarly, because 
\begin{equation*}
g_{(0)} |_{n(q^d-1),0} [W_{\mathfrak{p}}]  \equiv 0 \pmod{\mathfrak{p}^{n(q^d-1)/2 +1}}
\end{equation*}
it follows that 
\begin{equation*}
g_{(r)} |_{p^rn(q^d-1),0} [W_{\mathfrak{p}}]  = (g_{(0)} |_{n(q^d-1),0} [W_{\mathfrak{p}}] )^{p^r} \equiv 0 \pmod{\mathfrak{p}^{np^r(q^d-1)/2 +p^r}}.
\end{equation*}

The function $fg_{(r)}$ is a Drinfeld modular form of weight $k+np^r(q^d-1)$ and type $l$ for $\Gamma_0(\mathfrak{p})$ with rational coefficients. Thus by Proposition~\ref{traceformula}, $\Tr(fg_{(r)})$ is of weight $k+np^r(q^d-1)$ and type $l$ for $\Gl_2(A)$ and we have
\begin{equation*}
\Tr(fg_{(r)}) - f  = (\Tr(fg_{(r)}) -fg_{(r)})+f(g_{(r)}-1).
\end{equation*}

We first bound the valuation at $\mathfrak{p}$ of the term $f(g_{(r)}-1)$ from below, using the fact that  $g_{(r)} \equiv 1 \pmod{\mathfrak{p}^{p^r}}$:
\begin{equation*}
v_{\mathfrak{p}}(f(g_{(r)}-1)) \geq p^r + v_{\mathfrak{p}}(f).
\end{equation*}

We consider now
\begin{equation*}
\Tr(fg_{(r)}) -fg_{(r)}= \pi^{1-(k+np^r(q^d-1))/2} \left( (fg_{(r)})|_{k+np^r(q^d-1),l}[W_{\mathfrak{p}}] \right) | U_{\mathfrak{p}}.
\end{equation*}
Since we have $v_{\mathfrak{p}}(f|U_{\mathfrak{p}})\geq v_{\mathfrak{p}}(f)$, it follows that:
\begin{align*}\label{traceinequality}
v_{\mathfrak{p}}(\Tr(fg_{(r)}) -fg_{(r)})) & \geq  1-(k+np^r(q^d-1))/2 + v_{\mathfrak{p}}\left((fg_{(r)})|_{k+np^r(q^d-1),l}[W_{\mathfrak{p}}] \right) \\
	& = 1-(k+np^r(q^d-1))/2 + v_{\mathfrak{p}}\left(f|_{k,l} [W_{\mathfrak{p}}] \right) + v_{\mathfrak{p}}\left(g_{(r)}|_{np^r(q^d-1),0}[W_{\mathfrak{p}}] \right) \\
	&  = 1-(k+np^r(q^d-1))/2 + v_{\mathfrak{p}}\left(f|_{k,l} [W_{\mathfrak{p}}] \right) + np^r(q^d-1)/2 +p^r\\
	&  = 1-k/2  + v_{\mathfrak{p}}\left(f|_{k,l} [W_{\mathfrak{p}}] \right) +p^r.
\end{align*}

We conclude that
\begin{equation*}
v_{\mathfrak{p}}(\Tr(fg_{(r)}) - f ) \geq \min \{ p^r + v_{\mathfrak{p}}(f), p^r+1-k/2  + v_{\mathfrak{p}}\left(f|_{k,l} [W_{\mathfrak{p}}] \right) \}.
\end{equation*}

Now since $f$ has rational $u$-series coefficients, then so does $f|_{k,l} [W_{\mathfrak{p}}]$, since the Fricke involution of $X_0(\mathfrak{p})$ is defined over the rationals. Thus both $v_{\mathfrak{p}}(f)$ and $v_{\mathfrak{p}}\left(f|_{k,l} [W_{\mathfrak{p}}] \right)$ are finite.

 $\{\Tr(fg_{(r)})\}$ is the sequence of Drinfeld modular forms satisfying the requirements of the definition of a $\mathfrak{p}$-adic Drinfeld modular form.

\end{proof}

As a corollary we can prove Theorem \ref{tracetheorem}:

\begin{proof}
We begin by noting that for $f$ modular of weight $2$ and type $1$ on $\Gamma_0(\mathfrak{p})$, we have that
\begin{equation*}
\Tr(f|_{2,1}[W_{\mathfrak{p}}])=f|_{2,1}[W_{\mathfrak{p}}] + f|U_\mathfrak{p}
\end{equation*}
is a modular form of weight $2$ and type $1$ on $\Gl_2(A)$. However, for all $q\neq2$ this space contains no non-zero modular forms: In any case the full algebra of modular forms for $\Gl_2(A)$ is generated by $g$ and $h$. If $q\geq 4$, then $g$ is of weight $q-1\geq 3$, and so there are no forms of weight $2$. When $q=3$, the forms of weight $2$ are multiples of $g$, which have type $0$, and so there are no non-zero forms of weight $2$ and type $1$.  We conclude that $f|_{2,1}[W_{\mathfrak{p}}] = - f|U_\mathfrak{p}$. Therefore using Corollary \ref{valuationup} and the fact that the Fricke involution is rational, we have that $f|_{2,1}[W_{\mathfrak{p}}]$ has rational $\pi$-integral $u$-series coefficients.

Writing 
\begin{equation*}
g_{(0)}=g_d- \pi^{(q^d-1)/2}g_d |_{q^d-1,0} [W_{\mathfrak{p}}]
\end{equation*}
as before, we consider the map 
\begin{equation*}
f \to \Tr(fg_{(0)}).
\end{equation*}
This map takes an element of the space of forms of weight $2$ and type $1$ for $\Gamma_0(\mathfrak{p})$ to a form of weight $q^d+1$ and type $1$ for $\Gl_2(A)$. We have that $g_d$ has integral coefficients, and since it is a form for $\Gl_2(A)$ so does $g_d|_{q^d-1,0} [W_{\mathfrak{p}}]$ by Lemma \ref{integralityw}. Thus $g_{(0)}$ and $g_{(0)}|_{q^d-1,0}[W_{\mathfrak{p}}]$ have integral $u$-series coefficients. Now from the formula  
\begin{equation*}
\Tr(fg_{(0)}) = fg_{(0)} + \left( f |_{2,1}[W_{\mathfrak{p}}] g_{(0)}|_{q^d-1,0}[W_{\mathfrak{p}}] \right) | U_{\mathfrak{p}},
\end{equation*}
we conclude that $\Tr(fg_{(0)})$ also has rational, $\pi$-integral $u$-series coefficients. Thus the map $f \to \Tr(fg_{(0)})$ preserves rationality and $\pi$-integrality of the $u$-series expansion coefficients.

From the computations in the proof of Theorem \ref{tracemap}, we have
\begin{equation*}
v_{\mathfrak{p}}(\Tr(fg_{(0)}) - f ) \geq \inf \{ 1 + v_{\mathfrak{p}}(f), 1  + v_{\mathfrak{p}}\left(f|_{2,1} [W_{\mathfrak{p}}] \right) \} \geq 1,
\end{equation*}
so that $f \equiv \Tr(fg_{(0)}) \pmod{\mathfrak{p}}$. 

Now consider $\widetilde{M}_{\mathfrak{p},2,1}$ the set of $\tilde{f} \in A/{\mathfrak{p}}[\![u]\!]$ such that there is $f$ of weight $2$ and type $1$ for $\Gamma_0(\mathfrak{p})$ with rational, $\pi$-integral coefficients and $f \equiv \tilde{f} \pmod{\mathfrak{p}}$. The space of forms of weight $2$ and type $1$ for $\Gamma_0(\mathfrak{p})$ is of dimension $g_{\mathfrak{p}}+1,$ where
\begin{equation*}
g_{\mathfrak{p}} \defi
\begin{cases}
\frac{q(q^{d-1}-1)}{q^2-1} & \qquad \mbox{if $d$ is odd,}\\
\frac{q^2(q^{d-2}-1)}{q^2-1} & \qquad \mbox{if $d$ is even,}
\end{cases}
\end{equation*}
and has a basis of forms with integral $u$-series coefficients, and from this it follows that $\widetilde{M}_{\mathfrak{p},2,1}$ has dimension $g_{\mathfrak{p}}+1$ as an $A/{\mathfrak{p}}$-vector space.

Since $f \equiv \Tr(fg_{(0)}) \pmod{\mathfrak{p}}$, $\widetilde{M}_{\mathfrak{p},2,1}$ is a subspace of the $A/{\mathfrak{p}}$-vector space $\widetilde{M}_{q^d+1,1}$, the space that contains the reductions modulo $\mathfrak{p}$ of all of the forms of weight $q^d+1$ and type $1$ for $\Gl_2(A)$ with rational, $\pi$-integral $u$-series coefficients. However, the space $\tilde{M}_{q^d+1,1}$ also has dimension $g_{\mathfrak{p}}+1$, since $M_{q^d+1,1}(\Gl_2(A))$ has a basis of forms with integral $u$-series coefficients. Thus $\widetilde{M}_{\mathfrak{p},2,1}=\widetilde{M}_{q^d+1,1}$, and the trace map establishes a one-to-one correspondence between the spaces, as claimed.
\end{proof}

\begin{remark}
When $q=2$, there are no non-trivial types, and the space of forms of weight $2$ for $\Gl_2(A)$ is spanned by $g^2$. Thus we cannot guarantee that $\Tr(f|_{2,1}[W_{\mathfrak{p}}])=0$, or even that $f|_{2,1}[W_{\mathfrak{p}}]$ has integral coefficients. In the classical case the forms on $\Gamma_0(\ell)$ such that $\Tr(f)=\Tr(f|[W_{\ell}])=0$ are exactly linear combinations of newforms. It is reasonable to conjecture that a similar result holds here and that the existence of oldforms of weight $2$ for $\Gamma_0(\mathfrak{p})$ is exactly the obstruction to the result we seek.
\end{remark}

\section{Norm}

In light of Proposition \ref{normformula}, we begin by studying the product
\begin{equation*}
 \prod_{\substack{\lambda \in A \\ \degree \lambda <d}} f\left( \frac{z+\lambda}{\pi}\right)
\end{equation*}
for $f$ a function on $\Omega$ with $u$-series expansion $\sum_{n=0}^{\infty} a_n u^n$.

\subsection{A combinatorial result}

For this section, let $u$ be an indeterminate. Consider the polynomial
\begin{equation*}
\rho_{\pi}(x) - \frac{1}{u} \in A(\!(u)\!)[x].
\end{equation*}
By equation (\ref{rho}), this is a polynomial of degree $q^d$, and it is relatively prime to its formal derivative, and therefore separable. Denote by 
\begin{equation*}
\{ \gamma_j : 1 \leq j \leq q^d \}
\end{equation*}
the set of roots of this polynomial. We have:

\begin{proposition}\label{normprop}
Let $f(u) \in A[\![ u ]\!]$. Then
\begin{equation*}
\prod_{j=1}^{q^d} f\left(\frac{1}{\gamma_j}\right) \equiv f(u) \pmod{\mathfrak{p}}.
\end{equation*}
Furthermore, if $f(u) = \sum_{n=n_0}^{\infty} a_n u^n$, with $a_n \neq 0$, the leading coefficient of
\begin{equation*}
\prod_{j=1}^{q^d} f\left(\frac{1}{\gamma_j}\right) 
\end{equation*}
is $a_{n_0}^{q^d}$.
\end{proposition}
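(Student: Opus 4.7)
The plan is to reparametrize through $\beta_j := 1/\gamma_j$. These turn out to be roots of a polynomial whose reduction modulo $\mathfrak{p}$ is simply $\beta^{q^d} - u$, whose unique root (of multiplicity $q^d$) in the perfect closure of $(A/\mathfrak{p})[\![u]\!]$ is $u^{1/q^d}$; both claims then follow from symmetric-function considerations together with a Frobenius identity.

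First I would verify that $\beta_1,\dots,\beta_{q^d}$ are precisely the roots of
\[
P(\beta) := \beta^{q^d} - u\,f_\pi(\beta) \in A[\![u]\!][\beta],
\]
where $f_\pi(X) = X^{q^d}\rho_\pi(X^{-1})$ is the inverse cyclotomic polynomial from the proof of Proposition~\ref{integralityv}. Indeed, $\rho_\pi(\gamma_j) = 1/u$ gives $f_\pi(\beta_j) = \beta_j^{q^d}\rho_\pi(\gamma_j) = \beta_j^{q^d}/u$, so $P(\beta_j) = 0$. By Corollary~\ref{congprop}, $\rho_\pi(x) \equiv x^{q^d} \pmod{\mathfrak{p}}$, so each of $\pi,\alpha_1,\dots,\alpha_{d-1}$ lies in $\mathfrak{p}$; consequently $f_\pi(X) \equiv 1 \pmod{\mathfrak{p}}$, yielding the key reduction
\[
P(\beta) \equiv \beta^{q^d} - u \pmod{\mathfrak{p}}.
\]

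For the leading coefficient, Vieta applied to $P$ gives $\prod_j \beta_j = (-1)^{q^d+1}u$, which equals $u$ in characteristic $p$ regardless of parity. Each $\beta_j$ satisfies $\beta_j^{q^d} = u\,f_\pi(\beta_j)$ with $f_\pi(\beta_j)$ a unit in the appropriate local ring, so $\beta_j$ has positive $u$-valuation $1/q^d$. Writing $f(u) = a_{n_0}u^{n_0}(1 + O(u))$,
\[
\prod_{j=1}^{q^d} f(\beta_j) = a_{n_0}^{q^d}\Bigl(\prod_j \beta_j\Bigr)^{n_0}\prod_j\bigl(1 + O(\beta_j)\bigr) = a_{n_0}^{q^d} u^{n_0} + (\text{higher-order in } u),
\]
identifying $a_{n_0}^{q^d}$ as the leading coefficient.

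For the congruence, I would observe that $\prod_j f(\beta_j)$ is a symmetric function of $\beta_1,\dots,\beta_{q^d}$, hence expressible as a power series in the elementary symmetric polynomials $e_k(\beta_j)$; these are (up to sign) the coefficients of $P$ and so lie in $A[\![u]\!]$. Reducing modulo $\mathfrak{p}$, the factorization $\beta^{q^d} - u = (\beta - u^{1/q^d})^{q^d}$ in characteristic $p$ forces these symmetric functions to agree with those of $q^d$ copies of $u^{1/q^d}$; hence
\[
\prod_j f(\beta_j) \equiv f(u^{1/q^d})^{q^d} \pmod{\mathfrak{p}}.
\]
Applying Frobenius to the power series gives $f(u^{1/q^d})^{q^d} = \sum_n a_n^{q^d}u^n$, and since the residue field $A/\mathfrak{p}$ has order $q^d$, each $a_n^{q^d} \equiv a_n \pmod{\mathfrak{p}}$; the right-hand side therefore reduces to $f(u)$, completing the proof.

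The only technical obstacle is that the $\beta_j$ live in an extension of $A[\![u]\!]$, so their reduction modulo $\mathfrak{p}$ requires care. I would handle this by working exclusively with symmetric functions of the $\beta_j$, which live in $A[\![u]\!]$ and reduce unambiguously; the substitution $\beta_j \leftrightarrow u^{1/q^d}$ is shorthand for an identity of these symmetric functions modulo $\mathfrak{p}$, and every statement can be verified at the level of coefficients in $A[\![u]\!]$ via the reduced polynomial $P \equiv \beta^{q^d} - u$.
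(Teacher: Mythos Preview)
Your argument is correct and takes a genuinely different route from the paper's. The paper works directly with the $\gamma_j$ and expands $\prod_j f(1/\gamma_j)$ as a sum indexed by multisets $(n_1^{i_1},\dots,n_k^{i_k})$ of exponents. The diagonal terms ($k=1$) give $a_m^{q^d}u^m \equiv a_m u^m$, while the off-diagonal terms are rewritten as $m_\mu(\gamma)/\prod\gamma_j^N$ for a partition $\mu$ with fewer than $q^d$ parts; the paper then proves a separate combinatorial lemma (Lemma~\ref{conglemma}) about the transition coefficients between the monomial and elementary bases of symmetric functions to show that $m_\mu(\gamma)\equiv 0\pmod{\mathfrak{p}}$ for all such $\mu$. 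Your reparametrization to $\beta_j=1/\gamma_j$ sidesteps this entirely: once you observe that their minimal polynomial reduces to $\beta^{q^d}-u$, the substitution $\beta_j\leftrightarrow u^{1/q^d}$ and the Frobenius identity $f(u^{1/q^d})^{q^d}=\sum a_n^{q^d}u^n$ finish the job in one stroke, exploiting directly that $A$ has characteristic $p$. What the paper's approach buys is explicitness --- it never leaves $A[\![u]\!]$ and never needs to introduce $u^{1/q^d}$ even as shorthand --- but at the cost of the auxiliary combinatorial lemma; your approach is shorter and more conceptual, and your closing paragraph correctly identifies and defuses the one potential objection (that the $\beta_j$ themselves do not live in $A[\![u]\!]$) by noting that every actual equality is checked on elementary symmetric functions, which do.
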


To obtain this result requires studying symmetric polynomials evaluated at the roots $\gamma_j$. We quickly introduce the notation we will need. We refer the reader to \cite{macdonald} and \cite[Chapter 7]{stanley} for all proofs and further discussion.

Let $x=(x_1, x_2, \ldots)$ be a set of indeterminates, and $\mu=(\mu_1, \mu_2, \ldots)$ be a partition of a positive integer $n$. Then $\mu$ determines a monomial 
\begin{equation*}
x^{\mu} = x_1^{\mu_1} x_2^{\mu_2} \ldots
\end{equation*}
We define the monomial symmetric function
\begin{equation*}
m_{\mu} = \sum_{\alpha} x^{\alpha},
\end{equation*}
where the sum ranges over all distinct permutations $\alpha=(\alpha_1, \alpha_2, \ldots)$ of the entries of the vector $\mu=(\mu_1, \mu_2, \ldots)$. We also set
\begin{equation*}
m_{\emptyset} = 1.
\end{equation*}
When $\mu$ is allowed to range over all partitions of all positive integers, including the empty partition, the set $\{ m_{\mu} \}$ forms a $\mathbb{Z}$-basis of the algebra of symmetric functions.

We also define the elementary symmetric functions
\begin{equation*}
e_{k} = m_{1^k} = \sum_{i_1 < i_2 < \ldots < i_k} x_{i_1} \ldots x_{i_k}
\end{equation*}
for each $k\geq 1$, where $1^k$ is the partition of $k$ whose parts are all equal to $1$, and
\begin{equation*}
e_0= m_{\emptyset} = 1.
\end{equation*}
We further write, again for $\mu=(\mu_1, \mu_2, \ldots)$ a partition,
\begin{equation*}
e_{\mu} = e_{\mu_1} e_{\mu_2} \ldots.
\end{equation*}
Again, the set $\{ e_{\mu} \}$, when $\mu$ ranges over all partitions including the partition of $0$, is a $\mathbb{Z}$-basis of the algebra of symmetric functions.

\begin{lemma}\label{conglemma}
Let $p \in \mathbb{Z}$ be a prime. Let $\mu$ be a partition of the integer $n = m \cdot p^r$, for some positive integers $m$ and $r$, that is not the partition $(m, m, \ldots, m)$ (where $m$ appears $p^r$ times). Let $\xi$ be the partition of $n$ whose parts are all equal to $p^r$. Then when we write
\begin{equation*}
m_{\mu} = \sum_{\nu \,  \vdash \,  mp^r} a_{\mu \nu} e_{\nu},
\end{equation*}
the coefficient $a_{\mu \xi}$ appearing before the elementary symmetric function $e_{\xi}$ is divisible by $p$.
\end{lemma}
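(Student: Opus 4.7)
The plan is to extract $a_{\mu\xi}$ via a strategic specialization. I would start from the expansion $m_\mu = \sum_{\nu \,\vdash\, n} a_{\mu\nu} e_\nu$ and set $x_1 = \cdots = x_{p^r} = 1$ with $x_i = 0$ for $i > p^r$. Each elementary symmetric function $e_k$ then specializes to $\binom{p^r}{k}$, which by Kummer's theorem (or Lucas's) vanishes mod $p$ for $0 < k < p^r$, equals $1$ for $k \in \{0, p^r\}$, and equals $0$ for $k > p^r$.

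Reducing the specialized expansion modulo $p$, only those $\nu$ whose (positive) parts all equal $p^r$ can survive; since $|\nu| = n = mp^r$, the unique such partition is $\xi$ itself, and $e_\xi$ specializes to $1$. This produces the key congruence
\begin{equation*}
m_\mu(\underbrace{1, \ldots, 1}_{p^r}, 0, 0, \ldots) \equiv a_{\mu\xi} \pmod{p}.
\end{equation*}

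Next I would evaluate the left-hand side directly. If $\ell(\mu) > p^r$ it vanishes. Otherwise, letting $c_1, \ldots, c_s$ denote the multiplicities of the distinct parts of $\mu$, the specialization equals the multinomial coefficient $\binom{p^r}{c_1, \ldots, c_s,\, p^r - \ell(\mu)}$. Legendre's formula $v_p(n!) = (n - s_p(n))/(p-1)$ then yields
\begin{equation*}
v_p\!\binom{p^r}{c_1, \ldots, c_s,\, p^r - \ell(\mu)} = \frac{1}{p-1}\left(\sum_{i=1}^{s} s_p(c_i) + s_p(p^r - \ell(\mu)) - 1\right),
\end{equation*}
and this vanishes precisely when exactly one of the entries is a power of $p$ and the rest are zero. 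Since the entries sum to $p^r$, that one entry must equal $p^r$; as $|\mu| > 0$ rules out $p^r - \ell(\mu) = p^r$, the only possibility is some $c_i = p^r$, which forces $\mu$ to consist of a single distinct part with multiplicity $p^r$, namely $\mu = (m, m, \ldots, m)$ with $p^r$ entries. This is precisely the excluded case, so for every other $\mu$ the multinomial, and hence $a_{\mu\xi}$, vanishes modulo $p$.

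The combinatorial content is elementary once the correct specialization is identified; the main obstacle is simply recognizing that $x_i = 1$ for $i \leq p^r$ (and zero otherwise) is the substitution that simultaneously picks out $e_\xi$ on the right-hand side and produces an easily analyzed multinomial on the left. A minor sanity check — that the valuation formula always returns an integer — follows from $s_p(c) \equiv c \pmod{p-1}$ and $\sum_i c_i + (p^r - \ell(\mu)) = p^r \equiv 1 \pmod{p-1}$.
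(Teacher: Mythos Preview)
Your proof is correct and takes a genuinely different route from the paper's. The paper argues via the triangular transition between the monomial and elementary bases: writing $e_{\mu'} = m_\mu + \sum_{\nu < \mu} b_{\mu\nu} m_\nu$ and back-solving, it reduces the claim to showing $b_{\mu\xi'} \equiv 0 \pmod p$ for $\mu \neq \xi'$, then uses the symmetry $b_{\nu\kappa} = b_{\kappa'\nu'}$ to turn this into $b_{\xi\nu} \equiv 0 \pmod p$ for $\nu \neq \xi$, which follows from the Frobenius-type identity $e_{\xi'} = (e_m)^{p^r} \equiv m_\xi \pmod p$. Your argument sidesteps the transition-matrix machinery entirely: the specialization $x_1=\cdots=x_{p^r}=1$, $x_i=0$ otherwise, combined with Lucas' theorem, kills every $e_\nu$ mod $p$ except $e_\xi$, so the whole expansion collapses to $a_{\mu\xi}$, and the left side becomes a single multinomial coefficient whose $p$-divisibility is read off from Legendre's formula. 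Your approach is more self-contained and computational; the paper's approach is more structural, making transparent why the excluded partition $(m^{p^r})=\xi'$ is special (it is the conjugate of $\xi$) and tying the lemma directly to the algebra of the change-of-basis rather than to a particular numerical evaluation.
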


\begin{proof}
Throughout we fix a partition $\mu$ satisfying the hypotheses of the lemma. If $\mu'$ is the conjugate partition of $\mu$, we can write
\begin{equation*}
e_{\mu'} = m_{\mu} + \sum_{\nu < \mu} b_{\mu \nu} m_{\nu},
\end{equation*}
where $<$ denotes the natural order on the set of partitions of $n$, and each $b_{\mu \nu}$ is a nonnegative integer.

Solving for $m_{\mu}$ we obtain
\begin{equation*}
m_{\mu} = e_{\mu'} - \sum_{\nu < \mu} b_{\mu \nu} m_{\nu}.
\end{equation*}
To write the elements of the monomial basis $m$ in terms of the elements of the elementary basis $e$, it suffices to ``back-solve" repeatedly:
\begin{equation*}
m_{\mu} = e_{\mu'} - \sum_{\nu < \mu} b_{\mu \nu} \left(e_{\nu'} - \sum_{\kappa < \nu} b_{\nu \kappa} m_{\kappa}\right),
\end{equation*}
and so on. Proceeding in this manner, it suffices to show that $b_{\mu \xi'}$ is divisible by $p$ for any partition $\mu$ that is not $\xi'$ to ensure that $a_{\mu \xi}$ is divisible by $p$. (Note that we have excluded the case where $\mu = \xi'$ in the hypotheses of the lemma: $\xi'=(m, \ldots, m)$, where $m$ appears $p^r$ times.)

Since for any two partitions $\nu$ and $\kappa$, we have 
\begin{equation*}
b_{\nu \kappa} = b_{\kappa' \nu'},
\end{equation*}
we can instead show that $b_{\xi \nu}$ is divisible by $p$ for any partition $\nu$ of $n$ which is not the partition $\xi$.

The coefficients $b_{\xi \nu}$ appear in the expansion of $e_{\xi'}$ in terms of the monomial basis $m$. But
\begin{equation*}
e_{\xi'} = (e_m )^{p^r},
\end{equation*}
by definition. Furthermore, $e_m= m_{1^m}$, again by definition. Because we are raising to a $p^{th}$ power, all cross-terms disappear modulo $p$, and we are left with the congruence
\begin{equation*}
e_{\xi'} \equiv m_{\xi} \pmod{p}.
\end{equation*}

This shows that $b_{\xi \nu}$ is divisible by $p$ for any partition $\nu$ of $n$ which is not the partition $\xi$, and completes the proof.
\end{proof}

With this lemma, we can now prove Proposition \ref{normprop}:

\begin{proof}[Proof of Proposition \ref{normprop}]
Write
\begin{equation*}
f(u) = \sum_{n=0}^{\infty}a_nu^n \in A[\![ u ]\!].
\end{equation*}

Then the product 
\begin{equation*}
\prod_{j=1}^{q^d} f\left(\frac{1}{\gamma_j}\right) 
\end{equation*}
is a sum of terms of the form
\begin{equation*}
a_{n_1}^{i_1}a_{n_2}^{i_2} \ldots a_{n_k}^{i_k} \left( \sum_{\alpha} \frac{1}{\gamma^{\alpha} }\right),
\end{equation*}
where each $i_j$ is a positive integer and the $n_j$'s are distinct, $\sum_{j=1}^{k} i_j = q^d$,  $\alpha=(\alpha_1, \alpha_2, \ldots, \alpha_{q^d})$ ranges over all distinct permutations of the entries of the vector  $n=(n_1^{i_1}, n_2^{i_2}, \ldots n_k^{i_k})$ (the exponents here denote an entry that is repeated $i_j$ times), and as before $\gamma^{\alpha}=\gamma_1^{\alpha_1}\gamma_2^{\alpha_2}\ldots$

If $k=1$, i.e. $n=(m^{q^d})$ for some $m$, then 
\begin{equation*}
a_{m}^{q^d} \sum_{\alpha} \frac{1}{\gamma^{\alpha} } = a_{m}^{q^d} \frac{1}{\prod_{j=1}^{q^d} \gamma_j^m}.
\end{equation*}
Since $a_m \in A$, we have that $a_m^{q^d} \equiv a_m \pmod{\mathfrak{p}}$. 
Because the $\gamma_j$'s are all of the roots of the polynomial
\begin{equation*}
\rho_{\pi}(x) - \frac{1}{u},
\end{equation*}
whose constant term is $\frac{1}{u}$, it follows that
\begin{equation*}
\prod_{j=1}^{q^d} \gamma_j = \frac{1}{u}.
\end{equation*}
(This is true even when $q$ is a power of $2$, since in that case $(-1)^{q^d}=1=-1$ in $\mathbb{F}_q$.)
As a consequence we have
\begin{equation*}
a_{m}^{q^d} \sum_{\alpha} \frac{1}{\gamma^{\alpha} }= a_{m}^{q^d} \frac{1}{\prod_{j=1}^{q^d} \gamma_j^m}\equiv a_m u^m \pmod{\mathfrak{p}}.
\end{equation*}

We now study the sum
\begin{equation*}
 \sum_{\alpha} \frac{1}{\gamma^{\alpha} }
\end{equation*}
for a fixed set of indices $n=(n_1^{i_1}, n_2^{i_2}, \ldots n_k^{i_k})$, where $k >1$. 

Let $N = \max_{j} n_j$, then 
\begin{equation*}
 \sum_{\alpha} \frac{1}{\gamma^{\alpha} } = \sum_{\beta} \frac{\gamma^{\beta}}{\prod_{j=1}^{q^d} \gamma_j^N},
\end{equation*}
where $\beta$ ranges over all distinct permutations of the vector 
\begin{equation*}
((N-n_1)^{i_1}, (N-n_2)^{i_2}, \ldots, (N-n_k)^{i_k}).
\end{equation*}
Arranging the entries of this vector in decreasing order and discarding all of the entries equal to $0$, we obtain a partition $\mu$ of the integer $M=Nq^d- \sum_{j}n_j$, and this partition has strictly fewer than $q^d$ parts, but it is not the empty partition. Then we have
\begin{equation*}
 \sum_{\alpha} \frac{1}{\gamma^{\alpha} } =  \frac{m_{\mu}(\gamma)}{\prod_{j=1}^{q^d} \gamma_j^N},
\end{equation*}
where $m_{\mu}(\gamma)$ denotes the monomial symmetric function introduced above evaluated at $x_j=\gamma_j$, $1\leq j \leq q^d$. (As is customary in the study of symmetric functions, we let $x_j=0$ when $j> q^d$.)


We will show that 
\begin{equation*}
m_{\mu}(\gamma) \equiv 0 \pmod{\mathfrak{p}}, 
\end{equation*}
when $\mu$ is a partition of any integer that has strictly fewer than $q^d$ parts. This will complete the proof of the proposition, since then the only terms in the product
\begin{equation*}
\prod_{j=1}^{q^d} f\left(\frac{1}{\gamma_j}\right)
\end{equation*}
that do not vanish modulo $\mathfrak{p}$ will be the ones of the form  
\begin{equation*}
a_{m}^{q^d} \sum_{\alpha} \frac{1}{\gamma^{\alpha} },
\end{equation*}
which were discussed above.

By Corollary \ref{congprop}, we have
\begin{equation*}
\rho_{\pi}(x) - \frac{1}{u} \equiv x^{q^d} - \frac{1}{u} \pmod{\mathfrak{p}}.
\end{equation*}
As a consequence, for $1 \leq r \leq q^d-1$, we have
\begin{equation*}
e_r(\gamma) \equiv 0 \pmod{\mathfrak{p}},
\end{equation*}
where again we use this notation to denote the elementary symmetric function introduced above evaluated at $x_j=\gamma_j$, $1\leq j \leq q^d$. Again, as is customary in the study of symmetric functions, we let $x_j=0$ when $j>q^d$, so that 
\begin{equation*}
e_r(\gamma) = 0
\end{equation*}
when $r > q^d$.

Because of this, for any partition $\mu$ that contains at least one part unequal to $q^d$, 
\begin{equation*}
e_{\mu}(\gamma) \equiv 0 \pmod{\mathfrak{p}}.
\end{equation*}

Let now $\mu$ be a partition of $n$. If $n \not \equiv 0 \pmod{q^d}$, then writing 
\begin{equation*}
m_{\mu} = \sum_{\nu \,  \vdash \,  n} a_{\mu \nu} e_{\nu},
\end{equation*}
we have that each $\nu$ has at least one part that is not equal to $q^d$, so
\begin{equation*}
m_{\mu}(\gamma) \equiv 0 \pmod{\mathfrak{p}}.
\end{equation*}

Let us consider now the case where $n \equiv 0 \pmod{q^d}$, say $n = m \cdot q^d$. Then $n$ has exactly one partition $\xi$ whose parts are all equal to $q^d$, so
\begin{equation*}
m_{\mu}(\gamma) = \sum_{\nu \,  \vdash \,  m} a_{\mu \nu} e_{\nu}(\gamma) \equiv a_{\mu \xi} e_{\xi}(\gamma) \pmod{\mathfrak{p}}.
\end{equation*}
Since $\mu$ has strictly fewer than $q^d$ parts, $\mu$ is not the partition $(m, \ldots, m)$, with $m$ appearing $q^d$ times, so we may apply Lemma \ref{conglemma} to conclude that $a_{\mu \xi}=0$ in $\mathbb{F}_q$. Therefore
\begin{equation*}
m_{\mu}(\gamma) \equiv  0 \pmod{\mathfrak{p}},
\end{equation*}
and this completes the proof of the theorem.
\end{proof}

\subsection{Specializing at $z \in \Omega$}

Let now $z \in \Omega$ be fixed. The polynomial
\begin{equation*}
\rho_{\pi}(x) - \frac{1}{u(z)}, 
\end{equation*}
which is the specialization at $z \in \Omega$ of the polynomial studied above, is still of degree $q^d$, this time in $A(\!(u(z))\!)[x]$.

\begin{lemma}
Fix $z \in \Omega$. The set
\begin{equation*}
\left \{ u\left(\frac{z+\lambda}{\pi}\right) :  \lambda \in A, \degree \lambda < d \right \}
\end{equation*}
is exactly the set of the reciprocal of the roots of the polynomial
\begin{equation*}
\rho_{\pi}(x) - \frac{1}{u(z)}.
\end{equation*}
\end{lemma}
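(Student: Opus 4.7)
The plan is to exploit the functional equation $e_L(\pi w) = \rho_\pi(e_L(w))$ coming from the commutative diagram defining the Carlitz module, together with the $L$-periodicity of $e_L$, to see directly that each $1/u((z+\lambda)/\pi)$ is a root of $\rho_\pi(x) - 1/u(z)$. Then a counting argument using the injectivity of $e_L$ modulo $L$ will show that these $q^d$ reciprocals are distinct, hence exhaust the roots of this degree-$q^d$ polynomial.

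First I would set $w_\lambda = (z+\lambda)/\pi$ for $\lambda \in A$ with $\deg\lambda < d$, and compute
\begin{equation*}
\rho_\pi\bigl(e_L(\tilde{\pi}w_\lambda)\bigr) = e_L(\pi \tilde{\pi} w_\lambda) = e_L(\tilde{\pi}z + \tilde{\pi}\lambda) = e_L(\tilde{\pi}z) = \tfrac{1}{u(z)},
\end{equation*}
where the first equality is the Carlitz functional equation, the second is the definition of $w_\lambda$, and the third uses that $\tilde{\pi}\lambda \in \tilde{\pi}A = L$ so that $L$-periodicity of $e_L$ applies. Since $e_L(\tilde{\pi}w_\lambda) = 1/u(w_\lambda) = 1/u((z+\lambda)/\pi)$, this shows that each such reciprocal is a root of $\rho_\pi(x) - 1/u(z)$.

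Next I would verify distinctness. Suppose $u((z+\lambda)/\pi) = u((z+\mu)/\pi)$ for two representatives $\lambda,\mu \in A$ with $\deg\lambda,\deg\mu < d$. Then $e_L(\tilde{\pi}(z+\lambda)/\pi) = e_L(\tilde{\pi}(z+\mu)/\pi)$, so by $\mathbb{F}_q$-linearity of $e_L$ and the fact that its kernel is exactly $L = \tilde{\pi}A$, we get $\tilde{\pi}(\lambda-\mu)/\pi \in \tilde{\pi}A$, i.e., $\pi \mid (\lambda-\mu)$ in $A$. Since $\deg(\lambda-\mu) < d = \deg\pi$, this forces $\lambda = \mu$. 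One should also note that these values are well-defined and nonzero, which follows from $z \in \Omega = C - K_\infty$ ensuring $(z+\lambda)/\pi \notin A$, so $e_L(\tilde\pi (z+\lambda)/\pi)$ is finite and nonzero.

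The conclusion is then automatic: we have produced $q^d$ distinct elements, each a reciprocal of a root of a polynomial of degree $q^d$, hence the set of reciprocals equals the full root set. The only step that requires any real thought is the first, and the only subtlety there is keeping track of which factor of $\tilde{\pi}$ sits where, so that the $L$-periodicity step really does kill the $\tilde{\pi}\lambda$ contribution; once this is properly set up, the result drops out immediately.
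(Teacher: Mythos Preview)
Your proof is correct and follows essentially the same approach as the paper's: both use the Carlitz functional equation $\rho_\pi(e_L(\tilde\pi w)) = e_L(\pi\tilde\pi w)$ together with $L$-periodicity to show each reciprocal is a root, and both verify distinctness via $\ker e_L = L$ combined with the degree bound $\deg(\lambda-\mu) < d$. The only difference is the order of the two steps and that the paper phrases the periodicity step via additivity ($e_L(\tilde\pi z + \tilde\pi\lambda) = e_L(\tilde\pi z) + e_L(\tilde\pi\lambda)$ with $e_L(\tilde\pi\lambda)=0$), which is the same thing.
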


\begin{proof}
The set 
\begin{equation*}
\left \{ u\left(\frac{z+\lambda}{\pi}\right) :  \lambda \in A, \degree \lambda < d \right \}
\end{equation*}
has cardinality $q^d$: Indeed, recall that $u(z) = \frac{1}{e_L(\tilde{\pi}z)}$, for $L$ the lattice associated to the Carlitz module. Then if there are $\lambda_1$ and $\lambda_2$, each with degree less than $d$, such that
\begin{equation*}
u\left(\frac{z+\lambda_1}{\pi}\right) = u\left(\frac{z+\lambda_2}{\pi}\right),
\end{equation*}
it follows that
\begin{equation*}
e_L\left(\tilde{\pi}\left(\frac{\lambda_1-\lambda_2}{\pi}\right)\right) = 0.
\end{equation*}
By definition of $e_L$ this forces 
\begin{equation*}
\frac{\lambda_1-\lambda_2}{\pi} \in A,
\end{equation*}
which because of degree considerations can only be the case if $\lambda_1 = \lambda_2$.

We also have that
\begin{align*}
\rho_{\pi} \left( e_L\left(\frac{\tilde{\pi}(z+\lambda)}{\pi} \right) \right)& = e_L(\tilde{\pi}z + \tilde{\pi}\lambda) \\
& = e_L(\tilde{\pi}z) + e_L( \tilde{\pi}\lambda) \\
& = e_L(\tilde{\pi}z) =  \frac{1}{u(z)}.
\end{align*}
Thus we conclude that 
\begin{equation*}
\frac{1}{u\left(\frac{z+\lambda}{\pi}\right)},
\end{equation*}
as  $\lambda$ ranges over all elements of $A$ of degree less than $d$, is a set of $q^d$ distinct roots of the polynomial $\rho_{\pi}(x) - \frac{1}{u(z)}$, a polynomial of degree $q^d$. This concludes the proof.
\end{proof}

To simplify the notation and be consistent with the notation of the previous section, we will index the roots of
\begin{equation*}
\rho_{\pi}(x) - \frac{1}{u(z)} 
\end{equation*}
using natural numbers:
\begin{equation*}
\{ \gamma_j(z) : 1 \leq j \leq q^d \}.
\end{equation*}

Since the $\gamma_j(z)$ satisfy a polynomial with coefficients in $A$, we have that 
\begin{equation*}
\prod_{\substack{\lambda \in A \\ \degree \lambda <d}} f\left( \frac{z+\lambda}{\pi}\right) \in A[\![u]\!].
\end{equation*}
Furthermore, if $f$ has leading coefficient $1$, then the leading coefficient of 
\begin{equation*}
\prod_{\substack{\lambda \in A \\ \degree \lambda <d}} f\left( \frac{z+\lambda}{\pi}\right)
\end{equation*}
is 1.

As a consequence of Proposition \ref{normprop}, specializing at $z \in \Omega$ we have:
\begin{corollary}
Let $f$ be a function on $\Omega$ with $u$-series expansion 
\begin{equation*}
\sum_{n=0}^{\infty} a_n u^n \in A[\![ u ]\!].
\end{equation*}
Then
\begin{equation*}
 f(z) \prod_{\substack{\lambda \in A \\ \degree \lambda <d}} f\left( \frac{z+\lambda}{\pi}\right) \equiv f(z)^2 \pmod{\mathfrak{p}}.
\end{equation*}
\end{corollary}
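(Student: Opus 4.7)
The plan is simply to combine Proposition \ref{normprop} with the lemma immediately preceding the corollary, by specializing the formal indeterminate $u$ at the value $u(z)$. Write $F(u) = \sum_{n \geq 0} a_n u^n \in A[\![u]\!]$ for the formal series associated to $f$, so that $f(w) = F(u(w))$ for every $w \in \Omega$. In particular,
$$
\prod_{\substack{\lambda \in A \\ \degree \lambda < d}} f\!\left(\frac{z+\lambda}{\pi}\right) \;=\; \prod_{\substack{\lambda \in A \\ \degree \lambda < d}} F\!\left(u\!\left(\frac{z+\lambda}{\pi}\right)\right),
$$
and by the lemma preceding the corollary the $q^d$ arguments $u((z+\lambda)/\pi)$ appearing on the right are precisely the reciprocals $1/\gamma_j(z)$ of the roots of $\rho_{\pi}(x) - 1/u(z)$.

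Next, apply Proposition \ref{normprop} to $F$. The congruence
$$
\prod_{j=1}^{q^d} F\!\left(\frac{1}{\gamma_j}\right) \;\equiv\; F(u) \pmod{\mathfrak{p}}
$$
of that proposition is a valid identity modulo $\mathfrak{p}$ in a suitable ring of series in $u$: the left-hand side is a symmetric function of the $\gamma_j$'s and hence is expressible through the elementary symmetric polynomials of the $\gamma_j$'s, which up to sign are the coefficients of $\rho_{\pi}(x) - 1/u$ and therefore lie in $A[u^{-1}]$. Specializing $u \mapsto u(z)$ is legitimate because $u(z) \neq 0$ for $z \in \Omega$ (the exponential $e_L$ vanishes only on the lattice $L = \tilde{\pi}A$, and $\tilde{\pi}z \notin L$) and all series in sight converge at this value. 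The specialized congruence reads
$$
\prod_{j=1}^{q^d} F\!\left(\frac{1}{\gamma_j(z)}\right) \;\equiv\; F(u(z)) \;=\; f(z) \pmod{\mathfrak{p}}.
$$
Multiplying both sides by $f(z)$ yields the desired congruence.

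The only step that requires any thought is the passage from the formal identity of Proposition \ref{normprop} to its pointwise version at $z \in \Omega$, and this is routine once one notes that the identity lives in the subring of symmetric expressions in the $\gamma_j$. All of the substantive combinatorial work has already been done in the proof of Proposition \ref{normprop}, so there is no genuine obstacle here; the corollary is a clean consequence.
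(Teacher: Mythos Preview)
Your proposal is correct and follows exactly the approach the paper takes: the paper derives the corollary in one line, ``As a consequence of Proposition \ref{normprop}, specializing at $z \in \Omega$,'' and your write-up simply fleshes out that sentence by invoking the preceding lemma to identify the arguments $u((z+\lambda)/\pi)$ with the $1/\gamma_j(z)$ and then multiplying through by $f(z)$. Your added remarks on why the specialization is legitimate (symmetry in the $\gamma_j$, so everything lands in $A[\![u]\!]$; nonvanishing of $u(z)$ on $\Omega$) are more detail than the paper gives but are entirely in its spirit.
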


With this we may now prove Theorem \ref{normtheorem}:

\begin{proof}
Let $f$ be a Drinfeld modular form for $\Gamma_0(\mathfrak{p})$ that is an eigenform of the Fricke involution. By Proposition \ref{normformula}, 
\begin{equation*}
\N(f) = \frac{1}{\pi^{q^dk/2}}\; f \prod_{\substack{\lambda \in A \\ \degree \lambda <d}} f\left( \frac{z+\lambda}{\pi}\right).
\end{equation*}

Since $f$ has leading coefficient $1$, so does
\begin{equation*}
f \prod_{\substack{\lambda \in A \\ \degree \lambda <d}} f\left( \frac{z+\lambda}{\pi}\right).
\end{equation*}
It follows that 
\begin{equation*}
\widetilde{\N(f)} = f \prod_{\substack{\lambda \in A \\ \degree \lambda <d}} f\left( \frac{z+\lambda}{\pi}\right),
\end{equation*}
and the result follows.

\end{proof}

\bibliography{bibliography} {}

\providecommand{\bysame}{\leavevmode\hbox to3em{\hrulefill}\thinspace}
\providecommand{\MR}{\relax\ifhmode\unskip\space\fi MR }
\providecommand{\MRhref}[2]{%
  \href{http://www.ams.org/mathscinet-getitem?mr=#1}{#2}
}
\providecommand{\href}[2]{#2}
\begin{thebibliography}{10}

\bibitem{ahlgrenono}
S.~Ahlgren and K.~Ono, \emph{Weierstrass points on ${X}_0(p)$ and supersingular
  $j$-invariants}, Math. Ann. \textbf{325} (2003), 355--368.

\bibitem{ahlgrenpapa}
S.~Ahlgren and M.~Papanikolas, \emph{Higher {W}eierstrass points on
  ${X}_0(p)$}, Transactions of the American Mathematical Society \textbf{355}
  (2003), 1521--1535.

\bibitem{bosser}
V.~Bosser, \emph{Congruence properties of the coefficients of the {D}rinfeld
  modular invariant}, Manuscripta Math. \textbf{109} (2002), 289--307.

\bibitem{carlitz4}
L.~Carlitz, \emph{On certain functions connected with polynomials in a {G}alois
  field}, Duke Math. J. \textbf{1} (1935), 137--168.

\bibitem{fresnel}
J.~Fresnel and M.~van~der Put, \emph{G{\'{e}}om{\'{e}}trie analytique rigide et
  applications}, Progress in Mathematics, vol.~18, Birkh{\"{a}}user, 1981.

\bibitem{gekeler}
E.-U. Gekeler, \emph{On the coefficients of {D}rinfeld modular forms}, Invent.
  Math. \textbf{93} (1988), 667--700.

\bibitem{gekelerjacobian}
E.-U. Gekeler and M.~Reversat, \emph{Jacobians of {D}rinfeld modular curves},
  Journal f{\"{u}}r die reine und angewandte Mathematik \textbf{476} (1996), 27
  -- 93.

\bibitem{gerritzen}
L.~Gerritzen and M.~van~der Put, \emph{{S}chottky groups and {M}umford curves},
  Lecture Notes in Mathematics, vol. 817, Springer-Verlag, 1980.

\bibitem{gosseisenstein}
D.~Goss, \emph{$\pi$-adic {E}isenstein series for function fields}, Compositio
  Math. \textbf{41} (1980), 3--38.

\bibitem{hayes}
D.R. Hayes, \emph{Explicit class field theory for rational function fields},
  Transactions of the American Mathematical Society \textbf{189} (1974),
  77--91.

\bibitem{macdonald}
I.G. Macdonald, \emph{Symmetric {F}unctions and {O}rthogonal {P}olynomials},
  University Lecture Series, vol.~12, American Mathematical Society, 1997.

\bibitem{serre}
J.-P. Serre, \emph{Formes modulaires et fonctions z\^{e}ta $p$-adiques},
  Lecture Notes in Mathematics, vol. 350, pp.~191--268, Springer-Verlag, 1973.

\bibitem{stanley}
R.~P. Stanley, \emph{Enumerative {C}ombinatorics, {V}olume 2}, Cambridge
  Studies in Advanced Mathematics, vol.~62, Cambridge University Press, 1999.

\bibitem{vincentthesis}
C.~Vincent, \emph{Drinfeld modular forms modulo $\mathfrak{p}$ and
  {W}eierstrass points on {D}rinfeld modular curves}, Ph.D. thesis, University
  of Wisconsin -- Madison, 2012.

\end{thebibliography}
\bibliographystyle{amsplain}
\end{document}